\newtheorem{lemma}{Lemma}
\newtheorem{theorem}{Theorem}
\newtheorem{corollary}{Corollary}
\newtheorem{example}{Example}
\title{Unification in pretabular extensions of S4}
\author{Stepan~I. Bashmakov\thanks{This work is supported by the Krasnoyarsk Mathematical Center and financed by the Ministry of Science and Higher Education of the Russian Federation in the framework of the establishment and development of regional Centers for Mathematics Research and Education (Agreement No. 075-02-2020-1534/1).}\\
Institute of Mathematics and Computer Science\\
Siberian Federal University\\
Svobodny 79, Krasnoyarsk, 660041,
Russia \\ 
\texttt{ krauder@mail.ru}}
\begin{document}
\maketitle


\begin{abstract}
 L.L. Maksimova and L. Esakia, V. Meskhi showed that the modal logic $\mathcal{S}4$ has exactly 5 pretabular extensions PM1--PM5. In this paper, we study the problem of unification for all given logics. We showed that PM2 and PM3 have finitary, and PM1, PM4, PM5 have unitary types of unification. Complete sets of unifiers in logics are described.
\end{abstract}

\keywords{ pretabular logic \and Kripke semantic \and unification \and ground unifier \and projective formula \and unitary \and finitary}

\section*{Unification: tasks and methods}

The unification problem, apparently, was first investigated in the works of J. Robinson \cite{Robinson} in developing a resolution method for  of automatic proof systems. Having gone through a way of half a century development, the problem has become self-sufficient theory, actively studied in parallel in the areas of system programming \cite{Zakharov} and non-classical logic \cite{Balbiani}. In this paper, we consider exactly last field, where the main thesis of the theory was transformed into a statement about the possibility of turning formula into a theorem by replacement of variables.

Interest in the study of unification today includes establishing the unifiability of formulas and determining the boundaries of unifiability, searching for effective algorithms for constructing unifiers (and determining their forms) for formulas, as well as determining the best such substitutions, determining the type of unification in logic, as well as a number of related tasks.

One of the key modern methods is the algebraized approach proposed by S.~Ghilardi \cite{Ghil1, Ghil2} through projective approximation, which allowed describing the complete sets of formula unifiers quite efficiently. A few years before --- in 1992 --- S. Burris had proved the unitary unification for logics whose algebras contain a discriminatory term \cite{Burris}. As was shown later \cite{Dz2011}, actually S. Burris proved a projective unification in such logics, studied in detail by S.~Ghilardi \cite{Ghil1} and actively developed in many subsequent works \cite{DzW, IE2016} (including the papers of the author, \cite{BKR2, BKR2017, Bash2018}).

The projectivity of unification in logic allowed to prove the "good" types of unification in many logics and by many authors. One of the alternative approaches to describe complete sets of unifiers was proposed by V. Rybakov using adaptation of his method of $n$-characteristic models \cite {R2011cu}, successfully applied to solve the admissibility problem in a number of non-classical logics, starting in 1984 \cite{R1984, R1984FL}. Both of these approaches are reflected in this paper.

\section*{Pretabular Extensions of $\mathbf{\mathcal{S}4}$}

By a Kripke scale, we will standardly understand a pair $F = \langle W,R \rangle$, where $W$ is a basic set, and $R$ is a binary relation on $W$. The Kripke model $M := \langle F,\upsilon \rangle$ is defined as a scale with a valuation $\upsilon: Prop\mapsto 2^W$, where Prop is a countable set of propositional variables, all of the basic logical operators have their usual meaning, and $\forall x\in F, \forall \upsilon:$ 
$$(\langle F,x \rangle \Vdash_{\upsilon} \Box\varphi) \Leftrightarrow \forall y (x R y \Rightarrow \langle F,y \rangle\Vdash_{\upsilon} \varphi).$$

A formula $\varphi$ is \textit{valid}: 
\textit{on the model} $M := \langle F,\upsilon \rangle$, if $\forall x\in W$ $\langle F,x \rangle \Vdash_{\upsilon} \varphi$; 
\textit{on the scale} $F = \langle W,R \rangle$, if for any model $\langle F,\upsilon \rangle$ and $\forall x\in W$ $\langle F,x \rangle \Vdash_{\upsilon} \varphi$.

A class of scales is called \textit {characteristic} for a logic $\mathcal{L}$ iff all theorems of a logic are valid on all scales from this class.
A logic  $\mathcal{L}$ is called \textit{tabular}, if it can be characterized by a finite class of finite scales. A logic $\mathcal{L}$ is  \textit{pretabular}, if it is not tabular, but any of its proper extension is tabular.

The fundamental role in our research belongs to the well-known and studied logic $\mathcal{S}4$:
$$\mathcal{S}4 := K + (\Box p \rightarrow p) + (\Box p \rightarrow \Box\Box p)$$
which can be semantically characterized as the logic of all reflexive transitive scales. Of course, $\mathcal{S}4$ is not tabular. The tabularity problem is decidable over $\mathcal{S}4$ \cite{MaxVor2003}. 

Many well-known modal systems are extensions of $\mathcal{S}4$. These include the largest modal partner of intuitionistic logic $\mathcal{I}nt$ --- the Grzegorczyk logic $\mathcal{G}rz$:
$$\mathcal{G}rz := \mathcal{S}4 + (\Box(\Box(p \rightarrow \Box p) \rightarrow p) \rightarrow  p),$$
as well as the linear extension of $\mathcal{S}4$ --- the logic

$$\mathcal{S}4.3 := \mathcal{S}4 + \Box(\Box p \rightarrow q) \vee \Box(\Box q \rightarrow p).$$


In 1975, L.L. Maksimova \cite{Maks1975}, and independently in 1977 L.L.~Esakia and V.Yu.~Meskhi~\cite{EM1977} investigated pretabular extensions of $\mathcal{S}4$: it was proved that there are exactly 5 such logics, all of them are finitely approximable, Kripke complete, and of course axiomatizable. Following the notation proposed by L.L. Maksimova, we denote these pretabular logics $\mathbf{PM1}$--$\mathbf{PM5}$. 

\begin{figure}[h] \centering
	\includegraphics[width=11cm]{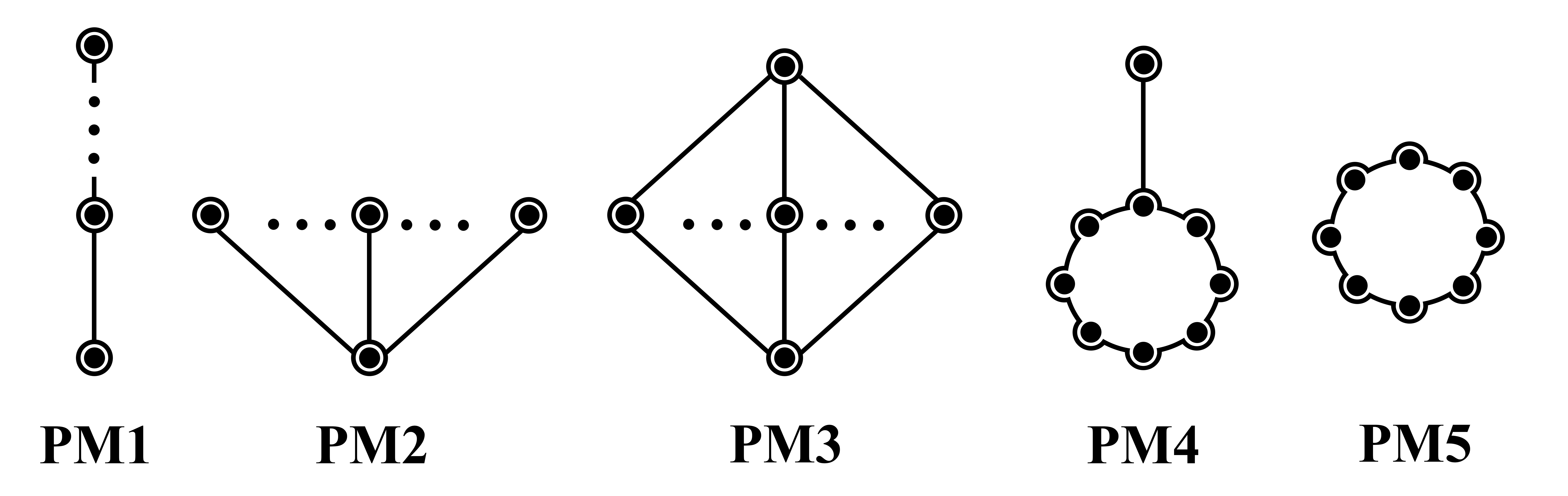}
	\caption{The scales of pretabular logics $\mathbf{PM1}$--$\mathbf{PM5}$.}
\end{figure}

$\mathbf{PM1}$--$\mathbf{PM5}$ are characterized by the classes of partially ordered scales. Further, also follow to the notation proposed by L.L. Maksimova \cite{Maks1975, Maks2016}, we will change only some indices in order to avoid misunderstandings when defining the $n$-characteristic model.

In reasoning and statements that are valid for all pretabular systems under consideration, for generality, we will use the common notation $\mathcal{L}$, setting $\mathcal{L}\in \{\mathbf{PM1}, \dots, \mathbf{PM5}\}\subset Ext(\mathcal{S}4)$.

\begin{enumerate}

\item $\mathbf{PM1} := \mathcal{S}4.3 + \mathcal{G}rz := [\Box (\Box (p \rightarrow \Box p)\rightarrow p) \rightarrow p].$

The logic $\mathbf{PM1}$ characterized by the class of linear scales $\mathbb{Z}_m$ \cite{Maks2016}:

\begin{center}
$\mathbb{Z}_m = \langle Z_m,R \rangle$, where $Z_m = \{1,\dots,m\}$, and $x R y \Leftrightarrow x \leq y$.
\end{center}

\item $\mathbf{PM2} := \mathcal{G}rz + \sigma_{2}:=[\Box p \vee \Box(\Box p \rightarrow \Box q \vee \Box\Diamond\neg q)].$

The logic $\mathbf{PM2}$ characterized by the class of partly ordered scales not containing 3-element chains \cite{Maks2016}:

\begin{center}
$\mathbb{V}_m = \langle V_m,R \rangle$, where $V_m = \{0,1,\dots,m\}$, and $x R y \Leftrightarrow (x = 0 \vee x=y)$.
\end{center}

\item $\mathbf{PM3} := \mathcal{G}rz + [\Box r \vee \Box (\Box r \rightarrow \sigma­_{2})] + (\Box\Diamond p \Leftrightarrow \Diamond\Box p).$

The characteristic class of scales for the logic $\mathbf{PM3}$ consists of partially ordered scales having the largest cluster (that is, the singular cluster of the first layer) and not containing 4-element chains \cite{Maks2016}:

\begin{center}
$\mathbb{U}_{m+1} = \langle U_{m+1},R \rangle$, where $U_{m+1} = \{0,1,\dots,m+1\}$, and $x R y \Leftrightarrow ([x=0] \vee [y=m+1] \vee [1\leq x=y\leq m])$.
\end{center}

\item $\mathbf{PM4} := \mathcal{S}4 + [\Box p \vee \Box(\Box p \rightarrow \Box q \vee \Box\Diamond\neg q)] + (\Box\Diamond p \leftrightarrow \Diamond\Box p).$

The logic $\mathbf{PM4}$ characterized by the class of partly ordered linear scales depth no more than 2 \cite{Maks2016} with the singular largest cluster and possible final cluster of the second layer: 

\begin{center}
$\mathbb{Y}_m = \langle Y_m,R \rangle$, where $Y_m = \{0,1,\dots,m\}$, and $x R y \Leftrightarrow ([x\leq m-1] \vee y=m)$.
\end{center}

\item $\mathbf{PM5} := \mathcal{S}5 = \mathcal{S}4 + [5:= p \rightarrow \Box\Diamond p].$

The logic $\mathbf{PM5}$ coincides with the modal system $\mathcal{S}5$ and is characterized by the class of scales of depth 1, which are cluster of a finite number of elements by equivalence \cite{Maks2016}:

\begin{center}
$\mathbb{X}_m = \langle X_m,R \rangle$, where $X_m = \{1,\dots,m\}$, and $\forall x,y: x R y$.
\end{center}

It is well known~\cite{Maks1975} that the logic $\mathbf{PM5}$ corresponds to the modal $\mathcal{S}5$, the pretabularity of which was established back in 1951, \cite{Scr}.

\end{enumerate}

An independent study of unification in the described pretabular logics was not carried out, although there are some correct conclusions transferring from known results obtained earlier for other cases of logics. We are interested in the systematization of the results, as well as in an autonomous study of unification for the cases of all pretabular logic $\mathbf{PM1}$--$\mathbf{PM5}$.

\section{Highlights of Unification Theory}

We introduce a number of basic definitions and known results of the unification problem that are used in further considerations. For a detailed study of unification in various non-classical logics, we recommend to turn to the monograph \cite{BaaSny}, articles \cite{Ghil1, Ghil2, BabR, BaaGhi, Jer_K, Dz_S5}.

A formula  $\varphi(p_1,\dots,p_s)$ called \textit{unifiable} in logic $\mathcal{L}$ iff there is a substitution $\sigma:$ $p_i \mapsto \sigma_i$, $\forall p_i \in Var(\varphi)$, s.t. $\varphi(\sigma_1,\dots,\sigma_s) \in \mathcal{L}$. In this case, $\sigma$ is an \textit{unifier} of $\varphi$.
\textit{Ground} unifier is a type of unifier, obtained by substituting the constants $\{\top, \bot\}$  instead of formula variables.

An unifier $\sigma$ of a formula $\varphi(p_1,\dots,p_s)$ called \textit{more general} than other $\sigma^1$ for $\varphi$ in $\mathcal{L}$ ($\sigma^1 \preceq \sigma$), if we can find a substitution $\sigma^2$, s.t. $\forall p_i\in Var(\varphi)$: $\sigma^1 (p_i)\equiv \sigma^2 (\sigma (p_i))\in \mathcal{L}$. Here $\preceq$ is a preorder on the set of all unifiers of the formula $\varphi$.

An unifier $\sigma$ of $\varphi(p_1,\dots,p_s)$ is \textit{maximal}, if for any other $\sigma^i$, either $\sigma^i \preceq \sigma$, 
or $(\sigma^i \npreceq \sigma) \& (\sigma \npreceq \sigma^i)$. If a formula has a single maximal unifier, it is called the \textit{most general} (\textit{mgu}, for short). A set of unifiers $CU$ for a formula $\varphi$ called \textit{complete} in $\mathcal{L}$, if for any unifier $\sigma$ of $\varphi$ there is $\sigma_1 \in CU$:  $\sigma \preceq \sigma^1$ (i.e. more general from $CU$).

Every unifier of a formula represents a solution to the unification problem, however, maximal and most general unifiers are interpreted as the best solutions to the unification problem. A logic has the {\it unitary} type of unification if for any unifiable formula in logic there is a mgu. If there are unifiable formulas that don't have a mgu, then logic can have the following types of unification: {\it finitary} type, if only finite sets of maximal unifiers exist for each such formula in logic; {\it infinitary} if there are formulas having an infinite number of maximal unifiers; {\it nullary} if some unifiable formulas don't have maximal unifiers, \cite{Ghil2,Jer_K}.

A substitution $\tau$ is \textit{projective unifier} for $\alpha(p_1,\dots,p_s)$ in logic $\mathcal{L}$, if both of the following conditions are met:
\begin{enumerate}
	\item $\tau$ is an unifier for $\alpha$: $\tau(\alpha)\in \mathcal{L}$;

	\item $\alpha$ is a \textit{projective} formula: $\forall p_i\in Var(\alpha): \Box \alpha \rightarrow [p_i \equiv \tau (p_i)]\in \mathcal{L}$.
\end{enumerate}

The importance of the search for projective unifiers is determined, first of all, by the following known

\begin{lemma} 
\cite{Ghil1} \label{Ghil}
if a substitution $\sigma_p$ is projective for $\varphi$ in $\mathcal{L}$, then $\{\sigma_p\}$ is a complete set of unifiers for $\varphi$ (i.e. $\sigma_p$ is an mgu for $\varphi$).
\end{lemma}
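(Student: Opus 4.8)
The plan is to unfold the definition of \enquote{complete set of unifiers}: since $\{\sigma_p\}$ is a singleton and $\sigma_p$ is already a unifier by condition~(1), it suffices to show that $\sigma_p$ dominates every unifier, i.e.\ that for an arbitrary unifier $\sigma$ of $\varphi$ we have $\sigma \preceq \sigma_p$. Spelling out $\preceq$, I must exhibit a substitution $\sigma^2$ with $\sigma(p_i) \equiv \sigma^2(\sigma_p(p_i)) \in \mathcal{L}$ for every $p_i \in Var(\varphi)$. The crucial (and essentially only) idea is to take $\sigma^2 := \sigma$ itself, so the whole problem reduces to proving $\sigma(p_i) \equiv \sigma(\sigma_p(p_i)) \in \mathcal{L}$ for each variable of $\varphi$.

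Next I would extract this from the two defining properties of a projective unifier together with the fact that $\mathcal{L}$, being an extension of $\mathcal{S}4$, is a normal modal logic and hence closed under necessitation, uniform substitution, and modus ponens. From $\tau$-unifiability of $\sigma$, namely $\sigma(\varphi) \in \mathcal{L}$, necessitation gives $\Box\sigma(\varphi) \in \mathcal{L}$. Applying the substitution $\sigma$ to the projectivity axiom $\Box\varphi \rightarrow [p_i \equiv \sigma_p(p_i)] \in \mathcal{L}$ and using that substitution commutes with all connectives (in particular with $\Box$) together with closure of $\mathcal{L}$ under substitution yields $\Box\sigma(\varphi) \rightarrow [\sigma(p_i) \equiv \sigma(\sigma_p(p_i))] \in \mathcal{L}$. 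One application of modus ponens then delivers exactly $\sigma(p_i) \equiv \sigma(\sigma_p(p_i)) \in \mathcal{L}$, which is what was needed.

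Finally I would assemble the pieces: $\sigma_p$ is a unifier of $\varphi$ by condition~(1), and the previous step shows $\sigma \preceq \sigma_p$ for every unifier $\sigma$; hence $\{\sigma_p\}$ is a complete set of unifiers, and being a one-element complete set consisting of a unifier, $\sigma_p$ is an mgu. I do not expect a genuine obstacle here: the argument is a short chain of closure properties of normal modal logics. The only points requiring a little care are bookkeeping about variables --- making sure that $\sigma^2 = \sigma$ is taken on the (finite set of) variables actually occurring in the formulas $\sigma_p(p_i)$, and that fresh variables introduced by $\sigma_p$, if any, are handled consistently --- and confirming that $\mathcal{L}$ indeed enjoys necessitation and uniform substitution, which it does as a modal logic over $\mathcal{S}4$.
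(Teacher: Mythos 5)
Your argument is correct and is precisely the standard proof of this fact: given any unifier $\sigma$, take $\sigma^2:=\sigma$, apply $\sigma$ to the projectivity condition $\Box\varphi\rightarrow[p_i\equiv\sigma_p(p_i)]$, and discharge the antecedent via necessitation of $\sigma(\varphi)\in\mathcal{L}$ and modus ponens. The paper itself states this lemma without proof, simply citing Ghilardi, and your reconstruction matches the argument in that reference (including the minor bookkeeping about extending $\sigma$ to any fresh variables of $\sigma_p$), so there is nothing to correct.
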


\begin{lemma}\label{ground}
Unifiability of an arbitrary formula $\varphi(p_1,\dots, p_s)$ in $\mathcal{L}$ can be efficiently set via substitutions $\sigma(\varphi)$ following kind: $\forall p_i \in Var(\varphi)$, $\sigma(p_i) \in \{\top, \bot\}$.
\end{lemma}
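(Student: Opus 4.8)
The plan is to prove the two-way link between unifiability of $\varphi$ in $\mathcal{L}$ and the existence of a \emph{ground} unifier of $\varphi$; the implication ``ground unifier $\Rightarrow$ unifiable'' is immediate from the definition, so the content is the converse, and the ``efficiently'' clause then follows because there are only $2^{s}$ ground substitutions on $Var(\varphi)=\{p_1,\dots,p_s\}$.

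The engine of the argument is the observation that the one-element reflexive scale $F_{\circ}:=\langle\{w\},\{(w,w)\}\rangle$ is a scale of every $\mathcal{L}\in\{\mathbf{PM1},\dots,\mathbf{PM5}\}$ (i.e.\ all theorems of $\mathcal{L}$ are valid on $F_{\circ}$). On $F_{\circ}$ one has $\langle F_{\circ},w\rangle\Vdash_{\upsilon}\Box\psi \Leftrightarrow \langle F_{\circ},w\rangle\Vdash_{\upsilon}\psi \Leftrightarrow \langle F_{\circ},w\rangle\Vdash_{\upsilon}\Diamond\psi$ for every $\psi$ and $\upsilon$, so each defining axiom of $\mathbf{PM1}$--$\mathbf{PM5}$ over $\mathcal{S}4$, after erasing all modalities, collapses to a classical tautology, while $\mathcal{S}4$ itself is trivially valid on $F_{\circ}$; carrying out this short uniform check is the first step, and by soundness every theorem of $\mathcal{L}$ is then valid on $F_{\circ}$. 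The second ingredient is the elementary fact that over $\mathcal{S}4$, hence over each $\mathcal{L}$, every variable-free formula is provably equivalent to $\top$ or to $\bot$ — an induction on the formula using $\mathcal{S}4\vdash\Box\top\leftrightarrow\top$ and $\mathcal{S}4\vdash\Box\bot\leftrightarrow\bot$ (the latter holds because these logics are reflexive). Combining the two: a constant formula $\chi$ lies in $\mathcal{L}$ \emph{iff} it is valid on $F_{\circ}$, i.e.\ iff $\langle F_{\circ},w\rangle\Vdash_{\upsilon}\chi$ for some (equivalently, every) valuation $\upsilon$.

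Now suppose $\varphi(p_1,\dots,p_s)$ is unifiable, say $\theta(\varphi)\in\mathcal{L}$. Fix any valuation $\upsilon$ on $F_{\circ}$ and define a ground substitution $\sigma$ on $Var(\varphi)$ by $\sigma(p_i):=\top$ if $\langle F_{\circ},w\rangle\Vdash_{\upsilon}\theta(p_i)$ and $\sigma(p_i):=\bot$ otherwise. A routine induction on $\varphi$ (the substitution lemma for Kripke models) shows that the value of $\sigma(\varphi)$ at $w$ under $\upsilon$ equals the value of $\varphi$ at $w$ under the valuation $p_i\mapsto\{\,x:\langle F_{\circ},x\rangle\Vdash_{\upsilon}\sigma(p_i)\,\}$, which by the choice of $\sigma$ is the same valuation as $p_i\mapsto\{\,x:\langle F_{\circ},x\rangle\Vdash_{\upsilon}\theta(p_i)\,\}$, so this equals the value of $\theta(\varphi)$ at $w$ under $\upsilon$ — true, since $\theta(\varphi)\in\mathcal{L}$ is valid on $F_{\circ}$. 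Hence the constant formula $\sigma(\varphi)$ is true at $w$, so $\sigma(\varphi)\in\mathcal{L}$ by the previous paragraph, and $\sigma$ is a ground unifier of $\varphi$. Effectivity is then clear: run through the $2^{s}$ ground substitutions $\sigma$ and for each decide $\sigma(\varphi)\in\mathcal{L}$ by simply evaluating the constant formula $\sigma(\varphi)$ at $w$ on $F_{\circ}$ (or, if preferred, by invoking decidability of $\mathcal{L}$, which follows from its finite axiomatizability together with the finite model property).

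I do not anticipate a genuine obstacle; the only point that needs care is that the verification that $F_{\circ}$ validates the logic, and the substitution-lemma bookkeeping, must be carried out \emph{uniformly} in $\mathcal{L}$, so that one argument covers all of $\mathbf{PM1}$--$\mathbf{PM5}$ at once, exactly as the statement demands — beyond that the reasoning is entirely routine.
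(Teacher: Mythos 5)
Your proof is correct, and its overall shape is the same as the paper's: both directions reduce unifiability to the existence of a \emph{ground} unifier and then obtain effectivity by enumerating the $2^{s}$ constant substitutions. The difference lies in how the collapse from an arbitrary unifier to a ground one is justified. The paper does it purely syntactically: if $\delta_1(q_1,\dots,q_r),\dots,\delta_s(q_1,\dots,q_r)$ unifies $\varphi$, then, since theorems are closed under substitution, one may replace the variables $q_1,\dots,q_r$ by arbitrary constants $c_j\in\{\top,\bot\}$, and each closed formula $\delta_i(c_1,\dots,c_r)$ reduces over $\mathcal{S}4$ to $\top$ or $\bot$, which directly yields the ground unifier. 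You route the same collapse semantically through the one-element reflexive scale $F_{\circ}$: you check that $F_{\circ}$ validates each of $\mathbf{PM1}$--$\mathbf{PM5}$, evaluate the given unifier at its single point, and read off the constants from the resulting truth values. The two mechanisms are essentially interchangeable --- your evaluation at $w$ under $\upsilon$ corresponds exactly to the paper's substitution $q_j\mapsto\top$ when $w\in\upsilon(q_j)$ and $q_j\mapsto\bot$ otherwise --- but the syntactic version needs no frame-validity check (only closure of theoremhood under substitution, available in any normal logic), whereas yours pays for a uniform verification that $F_{\circ}$ is an $\mathcal{L}$-scale and in return gets a concrete decision procedure for closed formulas (evaluate at $w$). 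Both arguments ultimately rest on the same two facts, which the paper leaves implicit in the step $gu(p_i):=\delta_i(c_1,\dots,c_r)\in\{\top,\bot\}$: that every variable-free formula is provably equivalent to $\top$ or $\bot$ over $\mathcal{S}4$, and that the logics in question are consistent.
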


\begin{proof}
The proof repeats argumentation from \cite{Bash2018}, here we describe its scheme.
Let $\varphi (p_1, \ldots, p_s)$ be unifiable in $\mathcal{L}$ and $\delta_1 (q_1,\dots,q_r), \dots, \delta_s (q_1,\dots,q_r)$ is its unifier. Then it is true that 
$$\delta (\varphi):= \varphi(\delta_1(q_1,\dots,q_r), \dots, \delta_s(q_1,\dots,q_r)) \in \mathcal{L}.$$

Any substitution of the variables $q_1,\dots,q_r$ to constants $c_i \in \{\top, \bot\} (i\in[1,r])$ preserves truth values of the formula, because of $\delta (\varphi)\in \mathcal{L}$, so
$$\varphi(gu(p_1), \dots, gu(p_s)) \in \mathcal{L},$$
where $gu(p_i):= \delta_i(c_1,\dots,c_r) \in \{\top, \bot \}$. Then any such $gu(\varphi)$ is a ground unifier of $\varphi$. To prove the existence of such a unifier of an arbitrary formula $\psi(p_1,\dots,p_s)$, it suffices to consider no more than $2^s$ substitution options $\{\top, \bot\}$ instead of variables. If among them there is a option s.t. $\psi(gu (p_1), \dots, gu(p_s)) \equiv_{\mathcal{L}} \top $, then $\psi$ is unifiable in $\mathcal{L}$, $gu$ is its ground unifier. If for all $2^s$ options $gu(\psi) \notin \mathcal{L}$, then $\psi$ doesn't have a ground unifier, and therefore isn't unifiable in $\mathcal{L}$.
\end{proof}

\section{Some obvious corollaries}

In 2012, W. Dzik and P. Wojtylak \cite{DzW} obtained a number of important results for the logic $\mathcal{S}4.3$ and its extensions, the most important of which here are the following statements:

\begin{theorem} (3.18 in \cite{DzW})

Each unifiable formula has a projective unifier in $\mathcal{S}4.3$.
\end{theorem}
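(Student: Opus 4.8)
\noindent
The plan is to follow the route of \cite{DzW}: reduce unifiability to \emph{ground} unifiers, and then show that a ground unifier of a unifiable formula can always be upgraded to a projective one, so that Lemma~\ref{Ghil} applies. By Lemma~\ref{ground}, if $\varphi(p_1,\dots,p_s)$ is unifiable in $\mathcal{S}4.3$ it has a ground unifier $\varepsilon\colon p_i\mapsto c_i\in\{\top,\bot\}$ with $\varphi(c_1,\dots,c_s)\in\mathcal{S}4.3$. The goal is to produce a substitution $\tau$ satisfying (i) $\tau(\varphi)\in\mathcal{S}4.3$ and (ii) $\Box\varphi\to(p_i\leftrightarrow\tau(p_i))\in\mathcal{S}4.3$ for each $i$; then $\tau$ is a projective unifier, and by Lemma~\ref{Ghil} an $\mathrm{mgu}$.

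For the construction I would argue semantically, using that $\mathcal{S}4.3$ is Kripke complete with respect to finite chains of clusters and that applying a substitution is the same as changing the valuation, so that (i) amounts to: for every such frame $F=\langle W,R\rangle$ and every valuation $\upsilon$, $\varphi$ is valid under the transformed valuation $\upsilon^{\tau}$. Consider the set $A=\{x\in W:(F,x)\Vdash_\upsilon\Box\varphi\}$: by transitivity $A$ is an up-set, by reflexivity $\varphi$ holds at every point of $A$, and by linearity its complement $L=W\setminus A$ is an initial segment lying strictly below $A$. I want $\tau$ to leave the valuation unchanged on $A$ — this makes (ii) automatic, since on $A$ we get $p_i\equiv\tau(p_i)$ while outside $A$ the antecedent $\Box\varphi$ is false — and to \emph{stabilise} the valuation on $L$: at a point of $L$ from which some $\Box\varphi$-world is still reachable, copy the valuation of the least such world, and at a point of $L$ from which $\Box\varphi$ is nowhere accessible, substitute the constants $c_i$. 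The point is that under $\upsilon^{\tau}$ the submodel generated by any point becomes, up to bisimulation, either a chain carrying the constant valuation $\varepsilon$ — on which $\varphi$ holds because $\varepsilon$ is a ground unifier — or a chain obtained by gluing one world onto a generated submodel inside $A$ with the same valuation as its immediate successor, hence bisimilar to that submodel of $A$, on which $\varphi$ holds because $\Box\varphi$ does. Either way $\varphi$ becomes valid, which gives (i); the verification is a routine bisimulation / $p$-morphism computation.

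The main obstacle is turning the stabilisation step into an honest single substitution: one has to express, in the basic modal language but interpreted over $\mathcal{S}4.3$-frames, the valuation ``at the least world above the current one that satisfies $\Box\varphi$'' — an ``until''-flavoured quantity — and this is exactly where linearity is used essentially; the attempt collapses already for $\mathcal{S}4$ or $\mathcal{S}4.2$, and indeed \cite{DzW} prove that containing $\mathcal{S}4.3$ characterises the normal extensions of $\mathcal{S}4$ with projective unification. A further technical nuisance is the presence of proper clusters at the bottom of $A$, where ``the value at the least $\Box\varphi$-world'' is not literally the value at a single world; this is handled by fixing, uniformly in $i$, a definable representative of that cluster. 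An alternative and more mechanical route avoids the explicit formula altogether by invoking Ghilardi's projective approximation \cite{Ghil1}: one shows that for $\mathcal{S}4.3$ the projective approximant of a unifiable $\varphi$ may be taken to be $\varphi$ itself, i.e. every unifiable formula is already projective — again with the linearity of the frames as the crux.
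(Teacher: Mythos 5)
This theorem is imported by the paper from \cite{DzW} (their Theorem~3.18) and is stated without proof, so there is no in-paper argument to measure your attempt against; I can only judge it on its own terms and against the source. Your framing is right: reduce to a ground unifier via Lemma~\ref{ground}, look for a substitution $\tau$ with $\tau(\varphi)\in\mathcal{S}4.3$ and $\Box\varphi\to(p_i\leftrightarrow\tau(p_i))\in\mathcal{S}4.3$, and conclude via Lemma~\ref{Ghil}; you also correctly identify linearity as the essential ingredient and note that the result characterises the extensions of $\mathcal{S}4.3$ with projective unification.

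The gap is the one you flag yourself, and it is not a technicality --- it is the entire content of the theorem: the substitution $\tau$ is never written down, and the re-valuation you describe semantically (``on $W\setminus A$, copy the valuation of the least $\Box\varphi$-world, else use the constants'') is not easily realised by a formula of the modal language. Concretely, the one-shot candidate $\tau(p):=(\Box\varphi\wedge p)\vee(\neg\Box\varphi\wedge gu(p))$ --- the recipe that does work for $\mathcal{S}5$ and for $\mathbf{PM4}$ in this paper --- already fails in $\mathcal{S}4.3$: take $\varphi:=\Box p\vee\Box\neg p$ with $gu(p)=\top$ and the reflexive two-element chain $a<b$ with $p$ true exactly at $a$; then $\Box\varphi$ holds at $b$ but not at $a$, so $\tau(p)$ is true at $a$ and false at $b$, and $\tau(\varphi)$ fails at $a$. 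Moreover ``the value of $p$ at the least $\Box\varphi$-world'' is a backward-looking, until-style quantity that the basic modal language does not express pointwise, and your fallback for proper clusters --- ``a definable representative of that cluster'' --- does not exist in general, since distinct points of one cluster need not be separable by any formula. The proof in \cite{DzW} closes exactly this gap by a genuine construction, in the spirit of Ghilardi's projective-approximation machinery \cite{Ghil1}: one shows that over linear frames the class of models of any unifiable $\varphi$ has the extension property, and that a finite composition of L\"owenheim-type substitutions then converges to a projective unifier (for the example above it yields something like $p\mapsto\Diamond\Box p$, quite unlike the one-shot formula). Until you either exhibit $\tau$ explicitly or carry out that extension-property argument, the attempt is a plan rather than a proof.
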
 

\begin{corollary} (3.19 in \cite{DzW})
A modal logic $\mathcal{L}$ containing $\mathcal{S}4$ has projective unification if and only if $\mathcal{S}4.3\subseteq \mathcal{L}$.
\end{corollary}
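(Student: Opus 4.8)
The plan is to prove the equivalence in two directions, leaning on Theorem 3.18 for the hard direction and on a separating-model argument for the easy one. For the ``if'' direction, assume $\mathcal{S}4.3 \subseteq \mathcal{L}$. Let $\varphi$ be any formula unifiable in $\mathcal{L}$. Since $\mathcal{S}4.3 \subseteq \mathcal{L}$, every $\mathcal{S}4.3$-unifier of $\varphi$ is an $\mathcal{L}$-unifier; moreover, unifiability in $\mathcal{L}$ implies unifiability in $\mathcal{S}4.3$ by the ground-unifier criterion of Lemma~\ref{ground} (a ground substitution works in $\mathcal{L}$, and one checks it equally yields a $\mathcal{S}4.3$-theorem — actually the cleanest route is: any ground unifier of $\varphi$ in $\mathcal{L}$ makes $\varphi$ a tautology of classical logic, hence an $\mathcal{S}4.3$-theorem as well). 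So $\varphi$ is $\mathcal{S}4.3$-unifiable; by Theorem~3.18 it has a projective unifier $\tau$ in $\mathcal{S}4.3$, meaning $\tau(\varphi) \in \mathcal{S}4.3 \subseteq \mathcal{L}$ and $\Box\varphi \rightarrow (p_i \equiv \tau(p_i)) \in \mathcal{S}4.3 \subseteq \mathcal{L}$ for each variable $p_i$. Hence $\tau$ is a projective unifier of $\varphi$ in $\mathcal{L}$ as well, and by Lemma~\ref{Ghil} it is an mgu. Thus $\mathcal{L}$ has projective unification.

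For the ``only if'' direction, argue by contraposition: suppose $\mathcal{S}4.3 \not\subseteq \mathcal{L}$. Then $\mathcal{L}$, being an extension of $\mathcal{S}4$, must be valid on some rooted finite $\mathcal{S}4$-frame that is \emph{not} linearly ordered by $R$ up to clusters — concretely one can find in the characteristic class of $\mathcal{L}$ a frame containing two $R$-incomparable points $a,b$ above a common root. The task is then to exhibit a formula $\varphi$ that is unifiable in $\mathcal{L}$ but has no projective unifier, equivalently (invoking Lemma~\ref{Ghil} in the contrapositive and the general theory) no most general unifier. The standard witness is a formula expressing a ``branching'' constraint, such as a variant of $\Diamond p \wedge \Diamond \neg p \wedge \Box(\Diamond p \vee \Diamond \neg p)$ combined with suitable boxed clauses forcing $p$ to be decided on each of the two incomparable successors independently; such a formula is unifiable (substitute constants appropriately on each branch) but its unifiers split into incomparable families corresponding to the two branches, so no single unifier dominates them all. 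One shows no projective unifier exists by noting that projectivity would require $\Box\varphi \rightarrow (p \equiv \tau(p)) \in \mathcal{L}$, which fails on the branching frame because $p$ is not determined by $\varphi$ there.

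The main obstacle is the ``only if'' direction: one must produce, uniformly from the mere failure $\mathcal{S}4.3 \not\subseteq \mathcal{L}$, an explicit non-projectively-unifiable formula and verify it has no mgu in $\mathcal{L}$. This requires care because $\mathcal{L}$ could be any $\mathcal{S}4$-extension omitting the $.3$ axiom, so the branching frame must be extracted from general frame-theoretic facts (e.g. that $\mathcal{S}4.3$ is exactly the logic of frames that are ``almost linear'', and any proper sub-logic admits a branching point in its canonical or finite model property class). Once the branching frame is in hand, the verification that the witness formula is unifiable but lacks a most general unifier is a finite model-theoretic computation over that frame. The ``if'' direction, by contrast, is essentially a one-line consequence of Theorem~3.18 together with Lemmas~\ref{Ghil} and~\ref{ground}, so the write-up should spend most of its effort on the converse.
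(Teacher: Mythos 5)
First, note that the paper does not prove this statement at all: it is imported verbatim as Corollary~3.19 of \cite{DzW} (a consequence of Theorem~3.18 there), so there is no in-paper proof to compare against. Judged on its own merits, your ``if'' direction is essentially correct and is indeed the only part the present paper actually uses: a unifiable $\varphi$ in a consistent $\mathcal{L}\supseteq\mathcal{S}4.3$ has a ground unifier (Lemma~\ref{ground}), the resulting closed formula collapses to $\top$ already in $\mathcal{S}4$ (not because it is a ``classical tautology'' --- it still contains modalities --- but because $\Box\top\equiv\top$ and $\Box\bot\equiv\bot$ over reflexive frames), hence $\varphi$ is $\mathcal{S}4.3$-unifiable, Theorem~3.18 supplies a projective unifier there, and both defining conditions of projectivity transfer upward to $\mathcal{L}$, after which Lemma~\ref{Ghil} gives the mgu.

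The ``only if'' direction, however, has a genuine gap. From $\mathcal{S}4.3\not\subseteq\mathcal{L}$ you immediately pass to ``a rooted finite $\mathcal{S}4$-frame for $\mathcal{L}$ with two $R$-incomparable points above a common root,'' but an arbitrary normal extension of $\mathcal{S}4$ need not be Kripke complete, let alone have the finite model property, so no such frame can be extracted from the mere non-derivability of the $.3$ axiom without an additional argument (algebraic or via descriptive/general frames, which is how \cite{DzW} in fact proceed). Moreover, the witness formula is never pinned down: ``a variant of $\Diamond p\wedge\Diamond\neg p\wedge\Box(\Diamond p\vee\Diamond\neg p)$ combined with suitable boxed clauses'' is a placeholder, and the claim that its unifiers ``split into incomparable families'' is exactly the statement that needs proof. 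Compare the concrete counterexamples the paper does work out for $\mathbf{PM2}$ and $\mathbf{PM3}$ ($\Box x\vee\Box\neg x$ and the Lemmon formula $L$, refuted at a branching point of a specific characteristic model): that is the level of explicitness required here, and it must be carried out uniformly in $\mathcal{L}$. As written, the hard direction is a plan rather than a proof; since the paper only invokes the easy direction (to conclude projective, hence unitary, unification for $\mathbf{PM1}$ and $\mathbf{PM4}$), the correct move in this context is simply to cite \cite{DzW} for the equivalence rather than to reprove it.
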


The formula $L:= \Box(\Box p \rightarrow q)\vee \Box(\Box q \rightarrow p)$ is deducible in the logic $\mathbf{PM4}$, and is an axiom in $\mathbf{PM1}$, which means $\mathbf{PM1},\mathbf{PM4}$ is a normal extensions of $\mathbf{S}4.3$. The McKinsey's formula $M = \Box\Diamond p \leftrightarrow \Diamond\Box p$ is deducible~in~$\mathbf{PM1}$ and is an axiom of $\mathbf{PM4}$. By virtue of these properties, and as a corollary of Theorem 4.2 and Corollary 3.18 proved in \cite{DzW}, the following is true:

\begin{corollary}
For pretabular logics $\mathbf{PM_{1,4}} \in \{\mathbf{PM1}, \mathbf{PM4}\}$ the following conditions hold and are equivalent:
\begin{enumerate}
	\item $\mathbf{PM_{1,4}}$ --- structurally complete;
	\item $\mathbf{PM_{1,4}}$ --- hereditarily structurally complete;
	\item $\mathbf{S}4.3\mathbf{M} \subseteq \mathbf{PM_{1,4}}$.
\end{enumerate}
\end{corollary}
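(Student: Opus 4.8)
The plan is to prove the three conditions equivalent by a cycle of implications, leaning heavily on the imported machinery from \cite{DzW} that is already quoted in the excerpt. The key external facts are: every logic extending $\mathcal{S}4.3$ has projective (hence unitary) unification (Theorem~1 and Corollary~1 above), and — this is the content of ``Theorem 4.2 in \cite{DzW}'' invoked in the paragraph preceding the statement — that among extensions of $\mathcal{S}4.3$ the hereditarily structurally complete ones are exactly those containing $\mathbf{S}4.3\mathbf{M}$ (equivalently, structural completeness, hereditary structural completeness and inclusion of the McKinsey axiom all coincide there). So the real work is to certify that $\mathbf{PM1}$ and $\mathbf{PM4}$ fall inside the scope of that theorem, i.e. that both are normal extensions of $\mathcal{S}4.3\mathbf{M}$.

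First I would verify $\mathbf{S}4.3 \subseteq \mathbf{PM1}$ and $\mathbf{S}4.3 \subseteq \mathbf{PM4}$. For $\mathbf{PM1}$ this is immediate from the axiomatization $\mathbf{PM1} = \mathcal{S}4.3 + \mathcal{G}rz$. For $\mathbf{PM4}$ one notes that the linearity axiom $L = \Box(\Box p \to q)\vee\Box(\Box q\to p)$ is valid on every scale $\mathbb{Y}_m$: these scales are linearly ordered (an initial chain of depth $\le 2$ with a possible final cluster), and $L$ is exactly the Kripke-frame condition of (weak) connectedness, so it holds on each $\mathbb{Y}_m$ and hence in $\mathbf{PM4}$ by completeness w.r.t.\ $\{\mathbb{Y}_m\}$. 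Thus $\mathbf{S}4.3\subseteq\mathbf{PM4}$, and Corollary~1 already gives projective unification for both logics.

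Next I would check the McKinsey axiom $M = \Box\Diamond p \leftrightarrow \Diamond\Box p$. For $\mathbf{PM4}$ it is literally one of the defining axioms, so $\mathbf{S}4.3\mathbf{M}\subseteq\mathbf{PM4}$ is immediate. For $\mathbf{PM1}$ the excerpt already states that $M$ is deducible in $\mathbf{PM1}$; to make this self-contained I would observe that each $\mathbb{Z}_m$ is a finite linear order, so from any point the set of $R$-successors has a last element, which forces $\Diamond\Box p$ and $\Box\Diamond p$ to agree there — hence $M$ is valid on all $\mathbb{Z}_m$ and so $M\in\mathbf{PM1}$. Therefore $\mathbf{S}4.3\mathbf{M}\subseteq\mathbf{PM1}$ as well. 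This establishes clause (3) outright for both logics, so the equivalence degenerates to: (3) holds, and (1)$\Leftrightarrow$(2)$\Leftrightarrow$(3) is precisely the statement of Theorem 4.2 of \cite{DzW} specialized to the extension $\mathbf{PM_{1,4}}$ of $\mathcal{S}4.3$. I would close by citing that theorem for the ring of implications (3)$\Rightarrow$(2)$\Rightarrow$(1)$\Rightarrow$(3) — with (2)$\Rightarrow$(1) being trivial and (1)$\Rightarrow$(3), (3)$\Rightarrow$(2) the substantive directions supplied by \cite{DzW}.

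The main obstacle is not any single hard argument but rather making sure the quoted results from \cite{DzW} are applied with the correct hypotheses: Theorem 4.2 there characterizes (hereditary) structural completeness only within $Ext(\mathcal{S}4.3)$, so the crux is the frame-validity checks that place $\mathbf{PM1}$ and $\mathbf{PM4}$ inside that interval — in particular confirming $L\in\mathbf{PM4}$ and $M\in\mathbf{PM1}$ from the respective characteristic classes $\{\mathbb{Y}_m\}$ and $\{\mathbb{Z}_m\}$. Once those inclusions are in hand, the equivalence is essentially a restatement of the imported theorem, and there is nothing further to compute.
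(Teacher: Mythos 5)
Your proposal matches the paper's own argument: the paper likewise establishes that $L$ and $M$ belong to both $\mathbf{PM1}$ and $\mathbf{PM4}$ (each being an axiom of one and derivable in the other), so that both logics are normal extensions of $\mathcal{S}4.3\mathbf{M}$, and then invokes Theorem 4.2 and Corollary 3.18 of the Dzik--Wojtylak paper for the equivalence of the three conditions. Your only addition is to justify the derivability claims semantically via validity on the characteristic scales $\mathbb{Y}_m$ and $\mathbb{Z}_m$, which the paper simply asserts; this is a correct and harmless elaboration, not a different route.
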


And by the Lemma~\ref{Ghil}, in conjunction with the above conditions,

\begin{corollary} \label{PM4}
Pretabular logics $\mathbf{PM1}$ and $\mathbf{PM4}$:
\begin{itemize}
	\item have projective unification;
	\item have unitary type of unification.
\end{itemize}
\end{corollary}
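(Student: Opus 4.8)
The plan is to reduce the whole statement to the two results already quoted: Corollary~3.19 of~\cite{DzW} (projective unification holds in exactly the extensions of $\mathcal{S}4.3$) together with Lemma~\ref{Ghil} (a projective unifier is an mgu). So the only genuine task is to certify that $\mathcal{S}4.3 \subseteq \mathbf{PM1}$ and $\mathcal{S}4.3 \subseteq \mathbf{PM4}$.

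First I would dispose of $\mathbf{PM1}$, where there is nothing to check: by definition $\mathbf{PM1} = \mathcal{S}4.3 + \mathcal{G}rz$, so $\mathcal{S}4.3 \subseteq \mathbf{PM1}$ at once. For $\mathbf{PM4}$ I would argue semantically. Recall that $\mathcal{S}4.3$ is axiomatised over $\mathcal{S}4$ by the connectedness axiom $L = \Box(\Box p \to q) \vee \Box(\Box q \to p)$ and is characterised by the class of all reflexive, transitive, connected scales. It then suffices to observe that every characteristic scale $\mathbb{Y}_m$ of $\mathbf{PM4}$ is connected: from the defining relation $x R y \Leftrightarrow ([x\le m-1] \vee y = m)$ one sees that $x R y$ holds whenever $x \le m-1$, and $y R m$ holds for every $y$, so any two points of $Y_m$ are $R$-comparable. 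Hence $L$ is valid on each $\mathbb{Y}_m$, i.e. $L \in \mathbf{PM4}$, and therefore $\mathcal{S}4.3 \subseteq \mathbf{PM4}$ — which is exactly the remark made immediately before the statement; one could equally just cite that remark.

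With $\mathcal{S}4 \subseteq \mathcal{S}4.3 \subseteq \mathbf{PM_{1,4}}$ established, Corollary~3.19 of~\cite{DzW} applies verbatim and yields projective unification in both $\mathbf{PM1}$ and $\mathbf{PM4}$: every formula $\varphi$ unifiable in $\mathbf{PM_{1,4}}$ admits a projective unifier $\sigma_p$. This gives the first bullet. For the second bullet I would invoke Lemma~\ref{Ghil}: for any unifiable $\varphi$ the projective unifier $\sigma_p$ has the property that $\{\sigma_p\}$ is a complete set of unifiers for $\varphi$, i.e. $\sigma_p$ is an mgu. Thus every unifiable formula of $\mathbf{PM1}$ (resp. $\mathbf{PM4}$) has a most general unifier, which is precisely the definition of the unitary unification type.

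I do not anticipate a real obstacle: the argument is an assembly of the cited facts, and the single point needing a line of verification is the connectedness of the scales $\mathbb{Y}_m$, which is immediate from their defining order. If one wanted to avoid the detour through $\mathbb{Y}_m$ altogether, the alternative is the purely syntactic check that $L$ is derivable from the axioms of $\mathbf{PM4}$, but the semantic route above is shorter and self-contained.
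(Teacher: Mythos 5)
Your proposal is correct and follows essentially the same route as the paper: establish $\mathcal{S}4.3 \subseteq \mathbf{PM1}, \mathbf{PM4}$ (the paper likewise notes that $L$ is an axiom of $\mathbf{PM1}$ and deducible in $\mathbf{PM4}$), then apply Corollary~3.19 of~\cite{DzW} for projective unification and Lemma~\ref{Ghil} for the unitary type. Your explicit semantic check that each $\mathbb{Y}_m$ is connected (using Kripke completeness of $\mathbf{PM4}$ with respect to these scales) is a reasonable way to fill in the derivability of $L$ that the paper only asserts.
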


In this paper, for the case of logic $\mathbf{PM4}$, we will separately consider the question of projective unification and the type of projective unifier adequate for this logic will be described.

The case of the logic $\mathbf{PM5}$ is completely described by \cite{Dz_S5}, in which W. Dzik proves unitary type of unification for $\mathcal{S}5$ and suggests the form of the most general unifier:

\begin{theorem} (6 in \cite{Dz_S5})
Modal logic $\mathcal{S}5$ (both in the standard formalization and in the formalization with strict implication) and all its extensions have unitary unification.
\end{theorem}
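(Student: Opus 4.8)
The plan is to obtain the statement as a quick consequence of the projective-unification machinery already recorded above, and, separately, to make the most general unifier explicit as the discussion around the statement indicates. The first step is to note that $\mathcal{S}4.3\subseteq\mathcal{S}5$: on a cluster $\mathbb{X}_m$ every point sees every point, so for any $\psi$ the formula $\Box\psi$ has a single truth value on the whole cluster, whence $\Box(\Box p\rightarrow q)$ is $\mathcal{S}5$-equivalent to $\Box p\rightarrow\Box q$ and the $.3$-axiom $\Box(\Box p\rightarrow q)\vee\Box(\Box q\rightarrow p)$ collapses to the tautology $(\Box p\rightarrow\Box q)\vee(\Box q\rightarrow\Box p)$. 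Hence Corollary~3.19 of \cite{DzW} applies: $\mathcal{S}5$ has projective unification, and then Lemma~\ref{Ghil} yields that every unifiable formula of $\mathcal{S}5$ has an mgu, i.e. the unification type is unitary. The same two implications apply to any extension $\mathcal{L}$ of $\mathcal{S}5$, because $\mathcal{L}\supseteq\mathcal{S}5\supseteq\mathcal{S}4.3$, so all extensions of $\mathcal{S}5$ are unitary as well.

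To also exhibit an explicit most general unifier, I would build the projective unifier by hand. Let $\varphi(p_1,\dots,p_s)$ be unifiable in $\mathcal{S}5$; by Lemma~\ref{ground} it has a ground unifier $\varepsilon\colon p_i\mapsto\varepsilon_i\in\{\top,\bot\}$ with $\varphi(\varepsilon_1,\dots,\varepsilon_s)\in\mathcal{S}5$. Put
$$\tau(p_i):=(\Box\varphi\wedge p_i)\vee(\Diamond\neg\varphi\wedge\varepsilon_i),\qquad i=1,\dots,s.$$
Since $\Diamond\neg\varphi\equiv\neg\Box\varphi$ is a theorem of $K$, one checks that $\Box\varphi\rightarrow(p_i\equiv\tau(p_i))\in\mathcal{S}5$ (indeed already in $K$), so $\varphi$ is a projective formula with candidate projective unifier $\tau$. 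That $\tau$ is a unifier is seen semantically on a cluster $\mathbb{X}_m$: since $\Box\varphi$ is constant on the cluster, either it holds there, and then $\tau(p_i)\equiv p_i$ and $\tau(\varphi)\equiv\varphi$, which is true by the $T$-axiom; or $\Diamond\neg\varphi$ holds there, and then $\tau(p_i)\equiv\varepsilon_i$ and $\tau(\varphi)\equiv\varphi(\varepsilon_1,\dots,\varepsilon_s)$, which is true because $\varepsilon$ is a ground unifier. Thus $\tau(\varphi)\in\mathcal{S}5$, $\tau$ is a projective unifier, and by Lemma~\ref{Ghil} it is an mgu for $\varphi$. For an extension $\mathcal{L}\supseteq\mathcal{S}5$ one takes $\varepsilon$ to be a ground unifier of $\varphi$ in $\mathcal{L}$ (which exists by the argument of Lemma~\ref{ground}); the projectivity identity is unchanged, and the unifier check goes through because the scales of $\mathcal{L}$ are again clusters, so $\Box\varphi$ stays cluster-constant. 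Hence the single substitution $\tau$ is an mgu in every extension of $\mathcal{S}5$.

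For the presentation of $\mathcal{S}5$ with strict implication $\varphi\Rightarrow\psi:=\Box(\varphi\rightarrow\psi)$ I would fix a definitional translation of $\top,\bot$ and of the Boolean and strict connectives, check that it commutes with substitution, and transport the mgu $\tau$ along it — or simply invoke the term-equivalence of the two presentations. The part I expect to require the most care is exactly this uniformity: confirming that the displayed $\tau$ does the job simultaneously for $\mathcal{S}5$, for each tabular extension, and in the strict-implication signature. The reduction in the first paragraph is itself immediate once $\mathcal{S}4.3\subseteq\mathcal{S}5$ is observed, so there is no real obstacle beyond bookkeeping; the only thing one must not overlook is that the grounding of unifiers (Lemma~\ref{ground}) together with the cluster-constancy of $\Box\varphi$ are precisely what make $\tau$ work, and that both survive in every extension and in either formalization.
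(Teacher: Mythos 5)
Your argument is correct, but it is worth noting that the paper does not prove this statement at all: it is imported verbatim as Theorem~6 of Dzik's paper \cite{Dz_S5}, with only the shape of the mgu quoted afterwards. What you supply is an actual proof, and it is sound on both tracks. The first track (observe $\mathcal{S}4.3\subseteq\mathcal{S}5$ via connectedness of clusters, then invoke Corollary~3.19 of \cite{DzW} and Lemma~\ref{Ghil}) is a legitimate two-line derivation from results the paper already cites, and it covers all extensions of $\mathcal{S}5$ at once since each contains $\mathcal{S}4.3$. The second track is more informative: your substitution $\tau(p_i):=(\Box\varphi\wedge p_i)\vee(\Diamond\neg\varphi\wedge\varepsilon_i)$ is exactly the projective unifier the paper itself uses later for $\mathbf{PM4}$ (Theorem on projectivity in $\mathbf{PM4}$), and it is $\mathcal{S}5$-equivalent to the case-split form $\Box\varphi\rightarrow p_i$ / $\Box\varphi\wedge p_i$ quoted after the statement; your key observation that $\Box\varphi$ is constant on a cluster is precisely what makes the unifier check go through pointwise. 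Two small caveats: the strict-implication formalization is only gestured at (term-equivalence is the right appeal, but you do not carry it out), and the uniformity claim for extensions tacitly excludes the inconsistent logic, where the statement is vacuous anyway. Neither affects correctness; your write-up in effect reconstructs Dzik's result from the other ingredients already present in the paper, which is more than the paper attempts.
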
 

The substitution $\sigma(x_i):=
\begin{cases}
   \Box \varphi\rightarrow x, &\text{if\hphantom{a}} gu(x)=\top\\
   \Box\varphi\wedge x, &\text{if\hphantom{a}} gu(x)= \bot
 \end{cases}$ is the mgu for every unifiable in $\mathcal{S}5$ formula $\varphi$.

\section{Counterexamples for cases $\mathbf{PM2}$ and $\mathbf{PM3}$}

\subsection{$\bm{\mathbf{PM2}},\bm{\mathbf{PM3}}$ have no unitary type}

As we know from the Lemma \ref{Ghil}, if a logic has projective unification, then any unifiable formula in this logic has a mgu, which means that a logic itself has the unitary type of unification.

\subsubsection{Unification in $\bm{\mathbf{PM2}}$ is not projective}

Lets show that the logic $\mathbf{PM2}$ doesn't have projective unification using the following example.

\begin{example}
$$ \varphi(x):= \Box x \vee \Box\neg x$$
\end{example}

Given formula is unifiable in the logic $\mathbf{PM2}$: its ground unifier is the substitution $gu: x\mapsto \bot$.

Let $\varphi$ be the formula in the language of $\mathbf{PM2}$ and has projective substitution $u$, i.e. by definition $\forall x\in Var(\varphi): \Box\varphi \rightarrow (x\equiv u(x)) \in \mathbf{PM2}$. We show that then $u$ cannot be an unifier for $\varphi$ in $\mathbf{PM2}$. 
To do this, consider the model $M:=\langle \mathbb{V}_m,\upsilon \rangle$, presented in Fig. 1.

\begin{figure}[h] \centering
	\includegraphics[width=6cm]{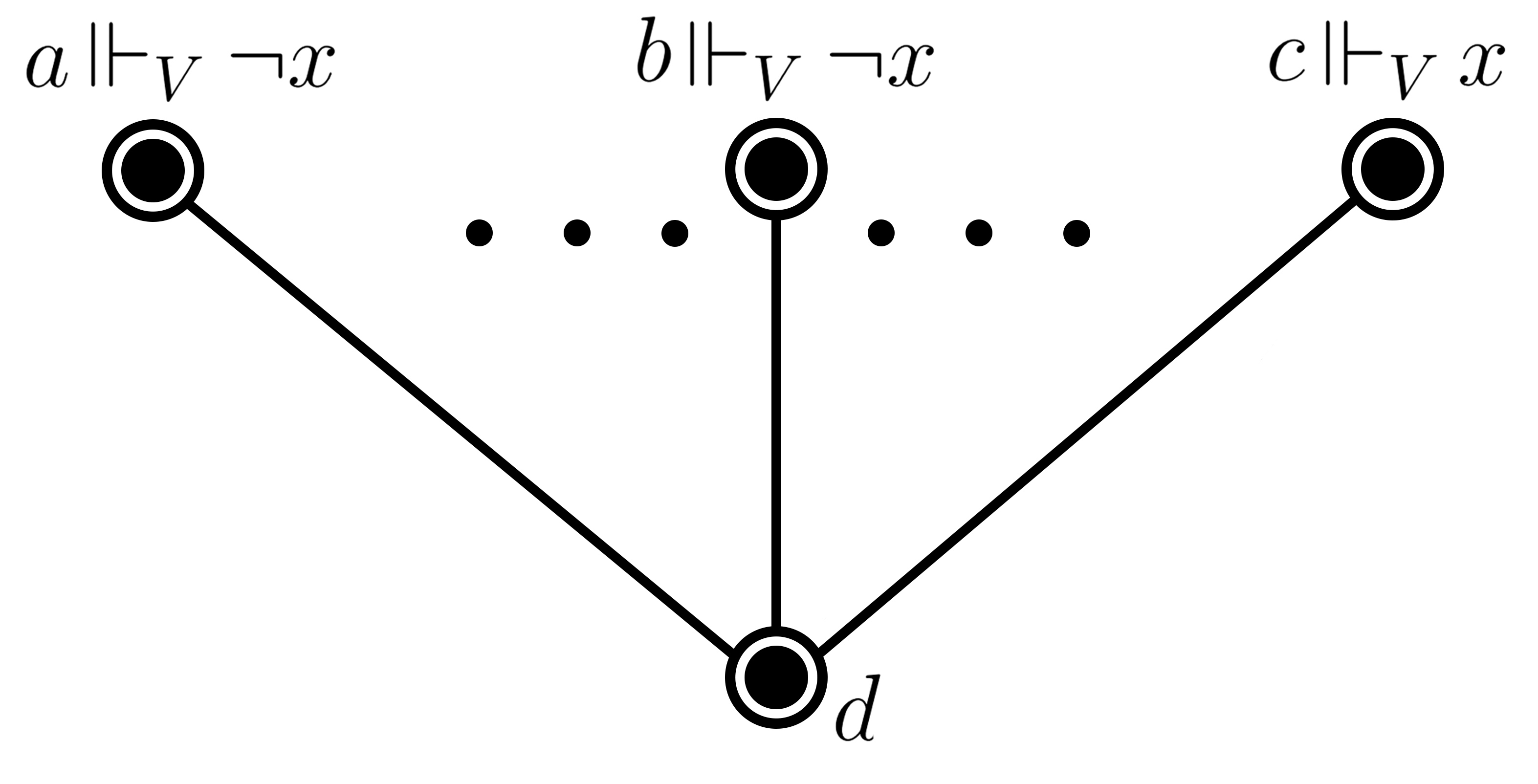}
	\caption{The model $M:=\langle \mathbb{V}_m,\upsilon \rangle$.}
\end{figure}

By virtue of $\langle \mathbb{V}_m,a \rangle \Vdash_{\upsilon} \neg x$, and therefore $\langle \mathbb{V}_m,a \rangle \Vdash_{\upsilon} \Box\neg x$ (because of $a$ is the first layer cluster of the model),
it is true that $\langle \mathbb{V}_m,a \rangle \Vdash_{\upsilon} \varphi$, thus $\langle \mathbb{V}_m,a \rangle \Vdash_{\upsilon} u(\varphi)$, where $u: x\mapsto u(x)$ with the condition $x\equiv u(x)$ (the simplest example of such a substitution is the case $u: x\mapsto x$). 

Similar reasoning is true for the point $b$, 
as well as $c$, at which holds $\langle \mathbb{V}_m,c \rangle \Vdash_{\upsilon} \Box x$, and hence again $\langle \mathbb{V}_m,c \rangle \Vdash_{\upsilon} \varphi$ and $\langle \mathbb{V}_m,c \rangle \Vdash_{\upsilon} u(\varphi)$.

However, regardless of valuation of the variable $x$ at the point $d$ (i.e. $\langle \mathbb{V}_m,d \rangle \Vdash_{\upsilon} x$ or $\langle \mathbb{V}_m,d \rangle \Vdash_{\upsilon} \neg x$),
holds $\langle \mathbb{V}_m,d \rangle \nVdash_{\upsilon} \Box x$ and $\langle \mathbb{V}_m,d \rangle \nVdash_{\upsilon} \Box\neg x$, and therefore $\langle \mathbb{V}_m,d \rangle \nVdash_{\upsilon} \varphi$. In that case, for $u: x\mapsto u(x)$ s.t. $x\equiv_{\mathbf{PM2}}u(x)$, holds $\langle \mathbb{V}_m,d \rangle \nVdash_{\upsilon} u(\varphi)$, so $u$ cannot be an unifier for the formula $\varphi$ in logic.

Thanks to this example, there are unifiable, but not projective formulas in $\mathbf{PM2}$. Therefore, the following is true

\begin{lemma}
The logic $\mathbf{PM2}$ does not have projective unification.
\end{lemma}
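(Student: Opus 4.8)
The plan is to obtain the statement as an immediate corollary of the Example that precedes it, with no additional construction. By definition, a logic has projective unification iff \emph{every} unifiable formula admits a projective unifier; hence it suffices to produce a single unifiable formula of $\mathbf{PM2}$ that has none, and $\varphi(x):=\Box x\vee\Box\neg x$ is exactly such a formula.

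First I would record that $\varphi$ is genuinely unifiable in $\mathbf{PM2}$: the ground substitution $x\mapsto\bot$ is a unifier, since $\Box\neg\bot$ (i.e.\ $\Box\top$) is a theorem of every normal modal logic, so $gu(\varphi)\in\mathbf{PM2}$; this is also guaranteed by Lemma~\ref{ground}. Then I would reuse the semantic argument of the Example. Suppose $u$ were a projective unifier for $\varphi$, so $\Box\varphi\to(x\equiv u(x))\in\mathbf{PM2}$. Fix any $m\ge 2$ and a valuation $\upsilon$ on $\mathbb{V}_m$ making $x$ true at one maximal point and false at another. Because $\mathbb{V}_m$ validates $\mathbf{PM2}$, the equivalence $x\equiv u(x)$ holds at every point of $\langle\mathbb{V}_m,\upsilon\rangle$ where $\Box\varphi$ holds; in a scale $\mathbb{V}_m$ every maximal point is reflexive and sees only itself, so there $\Box\varphi$ reduces to $\varphi$, which holds automatically ($\Box x$ or $\Box\neg x$ is forced). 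Thus $u(x)$ inherits from $x$ both truth values among the maximal points, so neither $\Box u(x)$ nor $\Box\neg u(x)$ --- and hence not $u(\varphi)$ --- is true at the root of $\mathbb{V}_m$; therefore $u(\varphi)\notin\mathbf{PM2}$, contradicting that $u$ is a unifier. Consequently $\varphi$ is a unifiable formula of $\mathbf{PM2}$ with no projective unifier, and $\mathbf{PM2}$ does not have projective unification.

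There is essentially no hard step: all the content lies in the Example, and the Lemma is a one-line consequence of it. The only points worth a second look are bookkeeping ones --- that every scale $\mathbb{V}_m$ indeed belongs to the characteristic class of $\mathbf{PM2}$, that $m$ is taken large enough (any $m\ge 2$) for the required valuation to exist, and, crucially, that the projectivity hypothesis is invoked only at points where $\Box\varphi$, not merely $\varphi$, is guaranteed, which is precisely why the argument is run at the maximal points and not at the root. None of these is a genuine obstacle.
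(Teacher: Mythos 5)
Your proposal is correct and follows essentially the same route as the paper: the same witness formula $\Box x\vee\Box\neg x$, the same ground unifier $x\mapsto\bot$, and the same semantic refutation on a $\mathbb{V}_m$-model where $x$ takes both values at maximal points, forcing $u(\varphi)$ to fail at the root. If anything, your version is slightly more careful than the paper's in noting that the projectivity condition $\Box\varphi\to(x\equiv u(x))$ is applied only at the maximal points, where $\Box\varphi$ is actually guaranteed.
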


As we already noted above, unitary type in a logic follows from projective unification, but not vice versa. At the same time, the absence of unitary type for the logic $\mathbf{PM2}$ follows from the formula already considered in the example above and the fact previously noted by S. Ghilardi \cite{Ghil2}. To prove this, we introduce the concept of projective approximation \cite {Jer_K}, proposed by Ghilardi for describing finite complete sets unifiers for formulas.

\textit{Projective approximation} of a formula $\varphi$ is a finite set $\Pi$ of projective formulas, s.t. $\varphi \vdash_{\mathcal{L}}\Pi$ and $\forall \pi\in\Pi: \pi\vdash_{\mathcal{L}}\varphi$. If $\Pi$ if a projective approximation of $\varphi$, then a set of projective unifiers $\Pi$ defines a finite complete set of unifiers for $\varphi$ in logic. 


The formula $\varphi_1(x):= \Box\neg x$ is unifiable: the substitution $gu_1: x\mapsto \bot$ is its unifier in $\mathbf{PM2}$. Besides, $\Vdash_{\mathbf{PM2}} \Box\neg\bot$ and $\Box\neg x \Vdash_{\mathbf{PM2}} x \leftrightarrow\bot$, and therefore $gu_1$ is the projective unifier and $\varphi_1$ is the projective formula in $\mathbf{PM2}$.

Similarly, the formula $\varphi_2(x):= \Box x$ is unifiable: the substitution $gu_2: x\mapsto \top$ is its unifier in $\mathbf{PM2}$, moreover $\Vdash_{\mathbf{PM2}} \Box\top$ and $\Box x \Vdash_{\mathbf{PM2}} x \leftrightarrow\top$, and therefore $gu_2$ is the projective unifier and $\varphi_2$ is the projective formula in $\mathbf{PM2}$.

The following disjunctive property holds for the formula $\varphi(x):= \Box x \vee \Box\neg x$:

$$\Box B \Vdash_{\mathbf{PM2}} \Box x \vee \Box\neg x \Rightarrow (\Box B \Vdash_{\mathbf{PM2}} \Box x) \vee (\Box B \Vdash_{\mathbf{PM2}} \Box\neg x).$$

It means that $\Pi(\varphi):= \{\Box x, \Box\neg x\}$. Therefore, the formula $\varphi$ has two maximal unifiers and doesn't have a mgu. Consequently, there is an unifiable formulas in $\mathbf{PM2}$, which don't have a mgu, therefore

\begin{lemma}
The logic $\mathbf{PM2}$ does not have unitary type of unification.
\end{lemma}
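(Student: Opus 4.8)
The plan is to take the formula $\varphi(x) := \Box x \vee \Box\neg x$ together with the two ground unifiers $gu_1 : x \mapsto \bot$ and $gu_2 : x \mapsto \top$ isolated above, show that together they already form a \emph{complete} set of unifiers for $\varphi$ in $\mathbf{PM2}$, and then show that these two unifiers are $\preceq$-incomparable; the failure of the unitary type is then immediate.

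First I would assemble the completeness claim from the projective-approximation machinery. Since $\varphi_1(x) = \Box\neg x$ and $\varphi_2(x) = \Box x$ are projective in $\mathbf{PM2}$ with projective unifiers $gu_1$ and $gu_2$ respectively, and since $\varphi \vdash_{\mathbf{PM2}} \varphi_1 \vee \varphi_2$ while $\varphi_1 \vdash_{\mathbf{PM2}} \varphi$ and $\varphi_2 \vdash_{\mathbf{PM2}} \varphi$, the set $\Pi(\varphi) = \{\Box x,\, \Box\neg x\}$ is a projective approximation of $\varphi$; the ``only if'' direction of the disjunction property $\Box B \vdash_{\mathbf{PM2}} \Box x \vee \Box\neg x \Rightarrow (\Box B \vdash_{\mathbf{PM2}} \Box x)\vee(\Box B\vdash_{\mathbf{PM2}}\Box\neg x)$ is what guarantees that no further projective refinement is needed. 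By Ghilardi's result on projective approximations together with Lemma~\ref{Ghil}, the associated set of projective unifiers $\{gu_1, gu_2\}$ is then a complete set of unifiers for $\varphi$ in $\mathbf{PM2}$.

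Next I would verify incomparability. Suppose $gu_1 \preceq gu_2$: then there is a substitution $\sigma^2$ with $gu_1(x) \equiv \sigma^2(gu_2(x)) \in \mathbf{PM2}$, i.e.\ $\bot \equiv \sigma^2(\top)$, i.e.\ $\bot \equiv \top$ in $\mathbf{PM2}$, contradicting the consistency of $\mathbf{PM2}$; symmetrically $gu_2 \npreceq gu_1$. This step is trivial precisely because both unifiers are ground, so ``more general than'' collapses to provable equivalence of the substituted constants. Consequently each of $gu_1, gu_2$ is maximal: if a unifier $\sigma$ of $\varphi$ satisfied $gu_1 \preceq \sigma$, completeness of $\{gu_1,gu_2\}$ gives $\sigma \preceq gu_1$ or $\sigma \preceq gu_2$; the first case makes $\sigma$ $\preceq$-equivalent to $gu_1$, the second yields $gu_1 \preceq gu_2$, which is impossible — hence nothing is strictly more general than $gu_1$, and likewise for $gu_2$.

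Finally I would conclude. If $\varphi$ had an mgu $\sigma$, it would be more general than every unifier, in particular $gu_1 \preceq \sigma$ and $gu_2 \preceq \sigma$; by completeness $\sigma \preceq gu_1$ or $\sigma \preceq gu_2$, and either alternative forces $gu_1$ and $gu_2$ into one $\preceq$-class, contradicting their incomparability. So $\varphi$ is a unifiable formula in $\mathbf{PM2}$ with no most general unifier (indeed with exactly two maximal unifiers, so finiteness of its maximal set is not in question), and therefore $\mathbf{PM2}$ does not have the unitary type. I expect the one genuinely delicate point to be the completeness of $\{gu_1,gu_2\}$ — correctly invoking the projective-approximation theorem and the $\Box$-disjunction property recorded above; the incomparability and the no-mgu deductions are then purely formal.
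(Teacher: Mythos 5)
Your proposal is correct and follows essentially the same route as the paper: the same formula $\Box x \vee \Box\neg x$, the same projective approximation $\Pi(\varphi)=\{\Box x,\Box\neg x\}$ justified by the same $\Box$-disjunction property, and the conclusion that the two resulting unifiers are maximal and no mgu exists. The only difference is that you explicitly verify the $\preceq$-incomparability of the two ground unifiers and the maximality/no-mgu deductions, which the paper leaves implicit.
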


\subsubsection{Unification in $\bm{\mathbf{PM3}}$ is not projective}

We also show by the example that the logic $\mathbf{PM3}$ doesn't have projective unification.

\begin{example}
$$ \varphi(x_1,x_2)= {L}:= \Box (\Box x_1 \rightarrow x_2) \vee \Box(\Box x_2 \rightarrow x_1),$$
where ${L}$ is the Lemmon's formula for the reflexive case of logic.
\end{example}

Given formula is unifiable in $\mathbf{PM3}$: ground unifiers for it are following substitutions:
$$gu_1: x_1\mapsto \top, x_2\mapsto \alpha;$$
$$gu_2: x_1\mapsto \alpha, x_2\mapsto \top,$$
where $\alpha$ is an arbitrary formula in the language of logic.

Let $\varphi$ be projective in $\mathbf{PM3}$ and $u$ is its projective substitution.
Then, by the definition of a projective formula, $\Box{L} \rightarrow (x\equiv u(x)) \in \mathbf{PM3}$. We show that in this case $u$ cannot be an unifier of ${L}$. 
To do this, consider the model $M:=\langle\mathbb{U} _m,\upsilon\rangle$, shown in Fig.~2.

\begin{figure}[h] \centering
	\includegraphics[width=7cm]{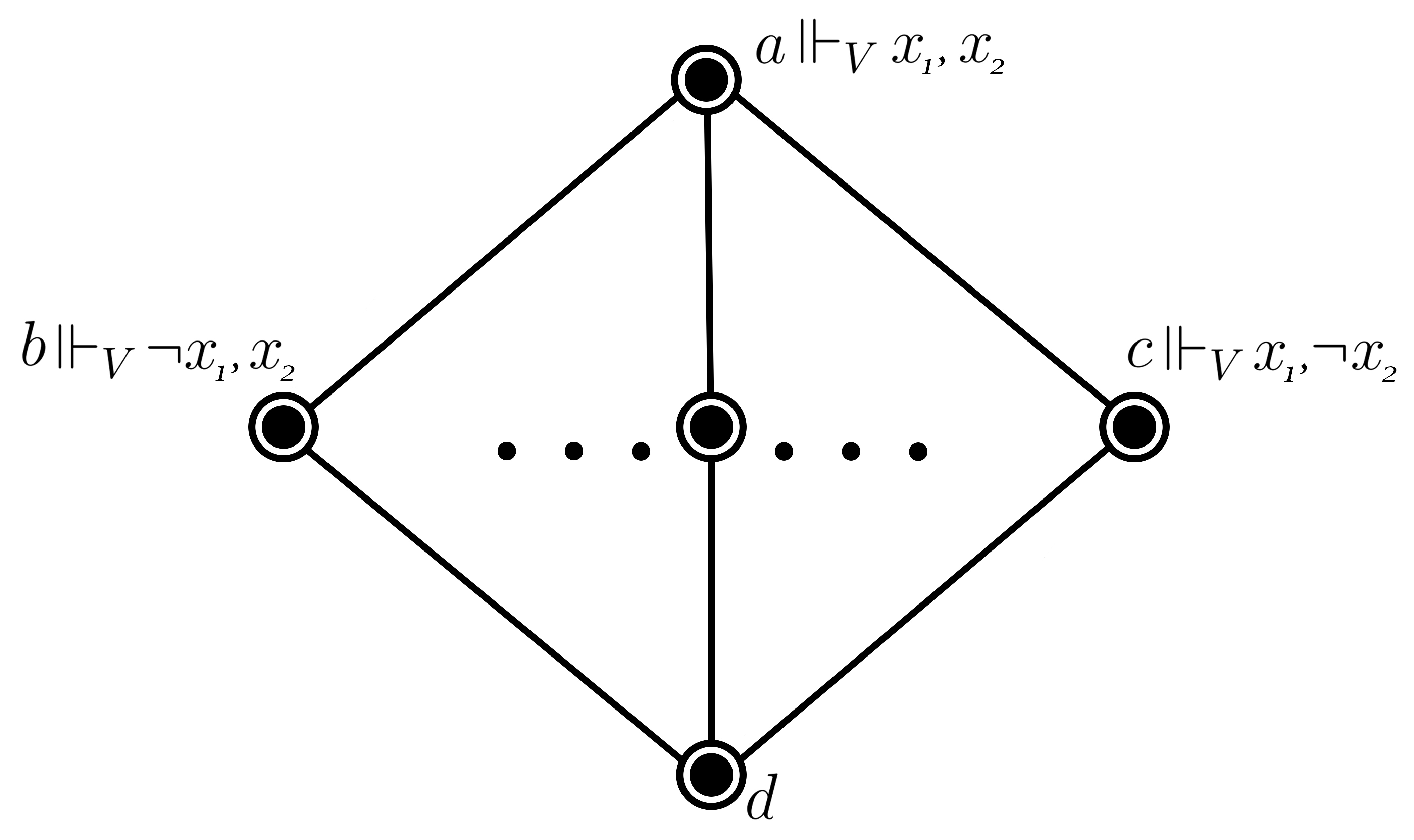}
	\caption{The model $M:=\langle \mathbb{U}_m,\upsilon \rangle$.}
\end{figure}

By virtue of $\langle \mathbb{U}_m,a \rangle \Vdash_{\upsilon} x_1, x_2$, holds $\langle \mathbb{U}_m,a \rangle \Vdash_{\upsilon} \Box (\Box x_1 \rightarrow x_2) \wedge \Box(\Box x_2 \rightarrow x_1)$, which means the more so $\langle \mathbb{U}_m,a \rangle \Vdash_{\upsilon} {L}$ and $\langle \mathbb{U}_m,a \rangle \Vdash_{\upsilon} u(L)$, where $u: x\mapsto u(x)$ with the condition $x\equiv u(x)$.

At the point $b$, by virtue of $\langle \mathbb{U}_m,b \rangle \Vdash_{\upsilon} \neg x_1, x_2$, the first term of the disjunction is valid $\langle \mathbb{U}_m,b \rangle \Vdash_{\upsilon} \Box x_1 \rightarrow x_2$. At the incomparable point $c$ of the same layer --- the second term is valid, i.e. $\langle \mathbb{U}_m,c \rangle \Vdash_{\upsilon} \Box x_2 \rightarrow x_1$, hence $\langle \mathbb{U}_m,\{b,c\} \rangle \Vdash_{\upsilon} {L}$ and again $\langle \mathbb{U}_m,\{b,c\} \rangle \Vdash_{\upsilon} u(L)$.

However, regardless of truth values of the variables $ x_1, x_2 $ at the point $d$, it is true that $\langle \mathbb{U}_m,d \rangle \nVdash_{\upsilon} \Box (\Box \sigma(x_1) \rightarrow \sigma(x_2))$ (because $\langle \mathbb{U}_m,c \rangle \nVdash_{\upsilon} \Box \sigma(x_1) \rightarrow \sigma(x_2)$) and $\langle \mathbb{U}_m,d \rangle \nVdash_{\upsilon} \Box (\Box \sigma(x_2) \rightarrow \sigma(x_1))$ ($\langle \mathbb{U}_m,b \rangle \nVdash_{\upsilon} \Box \sigma(x_2) \rightarrow \sigma(x_1)$). It follows that $\langle \mathbb{U}_m,d \rangle \nVdash_{\upsilon} L$, therefore for $u: x\mapsto u(x)$, where $x\equiv_{\mathbf{PM2}}u(x)$, holds $\langle \mathbb{U}_m,d \rangle \nVdash_{\upsilon} u(L)$, which means that $u$ cannot be an unifier for the formula $L$ in logic.

As in the case of $\mathbf{PM2}$, in $\mathbf{PM3}$ non-projective unifiable formulas also hold and is true:

\begin{lemma}
The logic $\mathbf{PM3}$ does not have projective unification.
\end{lemma}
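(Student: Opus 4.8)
The plan is to let the example of the Lemmon formula $L := \Box(\Box x_1 \rightarrow x_2) \vee \Box(\Box x_2 \rightarrow x_1)$ carry the whole argument: I would show that $L$ is unifiable in $\mathbf{PM3}$ but has no projective unifier, which by definition is exactly the failure of projective unification. Unifiability is already established above via the unifiers $gu_1$ and $gu_2$ (with $\alpha := \bot$ they become ground unifiers turning $L$ into a $\mathbf{PM3}$-theorem), so the only remaining task is to rule out a projective unifier of $L$, and for this I would spell out the model-theoretic argument accompanying the example.

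Suppose, for contradiction, that $u$ is a projective unifier of $L$ in $\mathbf{PM3}$. Then by the definition of a projective formula, $\Box L \rightarrow (x_i \equiv u(x_i)) \in \mathbf{PM3}$ for $i = 1, 2$, so this implication is true at every point of every $\mathbb{U}_m$-model. I would evaluate $L$ and $u(L)$ on the model $M := \langle \mathbb{U}_m, \upsilon \rangle$ of Fig.~2, in which the top cluster $a$ satisfies $x_1, x_2$, two incomparable points $b, c$ of the middle layer satisfy $\{\neg x_1, x_2\}$ and $\{x_1, \neg x_2\}$ respectively, and the root $d$ (together with any further middle points) is valuated arbitrarily. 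Two observations finish it: (i) each of $a, b, c$ sees only itself and $a$, and $L$ holds at all three, hence $\Box L$ holds at $a, b, c$, so $u(x_i)$ and $x_i$ receive equal truth values there; (ii) therefore at the root $d$ the first disjunct $\Box(\Box u(x_1) \rightarrow u(x_2))$ is false --- witnessed by $c$, where by (i) $\Box u(x_1)$ is true (since $x_1$ holds at $c$ and at $a$) while $u(x_2)$ is false (since $x_2$ fails at $c$) --- and symmetrically the second disjunct $\Box(\Box u(x_2) \rightarrow u(x_1))$ is false, witnessed by $b$. Thus $\langle \mathbb{U}_m, d \rangle \nVdash_{\upsilon} u(L)$, and by Kripke completeness of $\mathbf{PM3}$ with respect to its characteristic class of scales $\mathbb{U}_m$ we get $u(L) \notin \mathbf{PM3}$, contradicting that $u$ unifies $L$.

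Hence $L$ is unifiable in $\mathbf{PM3}$ while admitting no projective unifier, and therefore $\mathbf{PM3}$ does not have projective unification. I do not expect a genuine obstacle, since all the substance lies in the example; the one point that must be stated with care is that both disjuncts of $u(L)$ are refuted at $d$ only through its successors $b$ and $c$, whose successor sets are merely $\{b, a\}$ and $\{c, a\}$, so the refutation is independent of the valuation at $d$ and at any further middle point. This uses only that $\mathbb{U}_m$ contains two incomparable points below a common top cluster --- precisely the configuration absent from the linear scales of $\mathbf{PM1}$ and the cluster-plus-top scales of $\mathbf{PM4}$, which is why those logics do enjoy projective unification.
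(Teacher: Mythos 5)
Your proposal is correct and follows essentially the same route as the paper: the same Lemmon-formula counterexample, the same $\mathbb{U}_m$-model with top cluster $a$, incomparable middle points $b,c$ with complementary valuations, and root $d$, refuting both disjuncts of $u(L)$ at $d$ via $c$ and $b$ respectively. If anything, you are more careful than the paper in verifying that $\Box L$ holds at $a,b,c$ (so that projectivity forces $u(x_i)\equiv x_i$ there) and in noting that the refutation at $d$ is independent of the valuation at $d$ itself.
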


\section{${\bm{\mathbf{PM2}}}$, ${\bm{\mathbf{PM3}}}$ have finitary type}

S. Ghilardi in \cite{Ghil2} noted that many well-known systems (such as $\mathcal{K}4$,$\mathcal{S}4$, $\mathcal{S}4\mathcal{G}rz$, $\mathcal{GL}$ et al.) have a finitary type. Further, we prove that this is also true for the cases under consideration $\mathbf{PM2}$ and $\mathbf{PM3}$.

V. Rybakov proposed an approach to the description of finite complete sets of unifiers \cite{R2011cu}, based on the modified technique previously used in working with the problem of admissibility \cite{R1984}. However, that approach was based on the technique applied to the modal system $\mathbf{S}4$, which has the property of co-covering, which is not applicable in the case of the pretabular systems studied in this paper, because of the limited depth of the scales. At the same time, the pretabular logic $\mathbf{PM2}$ and $\mathbf{PM3}$ possess the weak co-covering property \cite{RK2013}.

The logic $\mathcal{L}\in Ext(\mathcal{S}4)$ has \textit{weak co-covering property}, if for any finite root $\mathcal{L}$-scale $F = b^{R}$ and arbitrary antichain of clusters $\mathcal{X}$ (i.e. incomparable in R) from $F\setminus b$, adding a reflexive cluster $c$ as a co-covering of the anti-chain to scale $\bigcup_{c\in \mathcal{X}^R} c^R$ also gives a $\mathcal{L}$-scale.

In this section, we give a modification of Rybakov's technique proposed for modal logic of finite layers \cite{R1984FL}, which is in good agreement with our case. Let us construct a description of complete sets of unifiers for an arbitrary unifiable formula in the logics $\mathbf{PM2}$ and $\mathbf{PM3}$ and show that both logics have a finitary type of unification, i.e. all complete sets are finite. To do this, at the beginning we define the $n$-characteristic model for these logics, the reduced normal form (rnf) of the formula, and also construct $n$-characteristic models of a special kind on sub-formulas of rnf with special valuations, which, as will be shown below, will give a unifier for an arbitrary unifiable formula in the corresponding logic.

\subsection{$\bm{\mathbf{PM2}}$, $\bm{\mathbf{PM3}}$: $\bm{n}$-characteristic models}

A model $M$ with a valuation $\upsilon$ of variables $p_1,\dots,p_n$ called \textit{$n$-characterization} for modal logic $\mathcal{L}$, if $\forall \varphi(p_1,\dots,p_n)$: $\varphi\in\mathcal{L} \Leftrightarrow M\Vdash_V\varphi$.

Recall that \textit{cluster} $C\subseteq W$ of a scale $F:= \langle W,R \rangle$ is a set of points (or elements) from $W$, such that
\begin{enumerate}
	\item $\forall a,b \in C: a R b;$
	\item $\forall a \in C, \forall b \in W : (a R b \& b R a) \rightarrow (b \in C).$
\end{enumerate}

\textit{Depth} of a cluster $x$ from a scale $F$ is the maximum number of clusters in chains starting with a cluster containing $x$. Denote:

\begin{itemize}
	\item $S_i(F)$ is a set of clusters depth $i$ from $F$;
	\item $Sl_i(F)$ is a set of clusters depth no more than $i$ from $F$;
	\item $M^{'}$ is a sub-model of a model $M$, i.e. subset $M^{'}\subseteq M$ with a relations and valuation induced from $M$. A sub-model $M^{'}$ is \textit{open}, if its clusters include all $R$-accessible from $M$;
	\item $u^n_i$ is a projection function (selecting the $i$th argument from $n$); for the case of $n$-characterization, we will omit the superscript.
\end{itemize}

Let $\mathbb{V}_m:= \langle V_m,R \rangle$ be the scale of logic $\mathbf{PM2}$, and $\mathbb{U}_m:= \langle U_m,R \rangle$ be the scale of logic $\mathbf{PM3}$ (all clusters of any models $\mathbb{V}_m$ and $\mathbb{U}_m$ of logics $\mathbf{PM2}$ and $\mathbf{PM3}$ are singular), $n$ is a natural number. Fix the variables $p_1, \dots, p_n$ and its valuation $V$.

Let $\mathcal{T}_{n}^1 := \langle T_{n}^1, R_{n}^1, \upsilon_{n}^1 \rangle$, where $T_{n}^1 = \bigcup_{i\in \{1,\dots,n\}} p_i$, $p_i$ form incomparable clusters by $R_{n}^1$, $\upsilon_n^1(p_i) := \{p_i | p_i\in p_i\}$.

Suppose the model $\mathcal{T}_n^{l}$ already built. Put the set of all anti-chains of clusters $W_l$ (i.e. incomparable in $R_n^l$ clusters) of model $\mathcal{T}_{n}^l$, containing at least one cluster from $S_l(\mathcal{T}_{n}^l)$. Then  $W_{l+1}$ define as follows:
$$Sl_{l+1}:= (W_l\times C) \backslash D,$$
where $C:= \{\langle p_i, l+1\rangle | i\in \{1,\dots,n\}\}$ is new clusters of the model depth $l+1$, $D:= \{\langle L, \langle p_i, l+1 \rangle \rangle | L\in W_l, L= \{\nabla\},\, \text{cluster}  \,\, \langle p_i, l+1\rangle \,\, \text{with the valuation} \,\, \langle p_i, l+1\rangle\Vdash_\upsilon p_i \Leftrightarrow p_i\in p_i \,\, \text{isomorphic to submodel of cluster} \,\, \nabla\}$.

Let $T_n^{l+1}:= T_n^l \cup Sl_{l+1},$ and $\forall x\in Sl_{l+1}$:
$$x \overline{R^{l+1}} y \Leftrightarrow [x=y]\vee [\exists z \exists t (z\in u_1(x)\wedge t\in z\wedge t R_n^{l} y)] \vee$$
$$\vee [(y\in Sl_{l+1}) \wedge (u_1(x)= u_1(y))\wedge(u_2(u_2(x))=u_2(u_2(y)))],$$
$$R_n^{l+1}:=\overline{R^{l+1}}\cup R_n^{l},$$
$$\upsilon_n^{l+1}(p_i):= \upsilon_n^l (p_i)\wedge \{x | x\in Sl_{l+1}, p_i \in u_1(u_2(x))\},$$
$$\mathcal{T}_{n}^{l+1}:= \langle T_{n}^{l+1}, R_{n}^{l+1}, \upsilon_{n}^{l+1} \rangle.$$

Each model $\mathcal{T}_{n}^j$ constructed in this way is an open sub-model of model of the next layer --- $\mathcal{T}_{n}^{j+1}$ --- and consists of its clusters of depth no more than $j$.
Due to the finite model property of $\mathbf{PM2}$, $\mathbf{PM3}$, the method of constructing $n$-characteristic models from \cite{R1984FL}, as well as reasoning from \cite{Rym96} for modal extensions of $\mathbf{S}4$ depth 2, holds

\begin{lemma}\label{ncharPM2}
The model $\mathcal{T}_{n}^{2}:= \langle T_{n}^{2}, R_{n}^{2}, \upsilon_{n}^{2} \rangle$ is an $n$-characterization for $\mathbf{PM2}$:
$$ \forall \varphi(p_1,p_2,\dots,p_n): \varphi\in\mathbf{PM2} \Leftrightarrow \mathcal{T}_{n}^{2}\Vdash \varphi.$$
\end{lemma}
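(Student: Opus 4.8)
The plan is to establish the biconditional by its two directions separately, using the finite model property of $\mathbf{PM2}$ together with a $p$-morphism argument in the style of Rybakov's construction of $n$-characteristic models~\cite{R1984FL,Rym96}.

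\emph{Soundness: $\varphi\in\mathbf{PM2}\Rightarrow\mathcal{T}_n^2\Vdash\varphi$.} Here it suffices to check that the underlying scale $\langle T_n^2,R_n^2\rangle$ is a $\mathbf{PM2}$-scale, for then every theorem of $\mathbf{PM2}$ is valid on it under every valuation, in particular under $\upsilon_n^2$. I would read this off the construction: $\langle T_n^2,R_n^2\rangle$ is a finite partial order of depth $2$, its depth-$1$ points form an antichain of singular clusters, and each depth-$2$ point $b$ is a co-covering of the nonempty antichain $u_1(b)$ of depth-$1$ points and is incomparable to every other depth-$2$ point; hence every point-generated subscale $b^{R_n^2}$ is isomorphic to some $\mathbb{V}_m$. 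Since a logic in $Ext(\mathcal{S}4)$ is valid on a scale iff it is valid on every point-generated subscale, and every $\mathbb{V}_m$ is a $\mathbf{PM2}$-scale, the claim follows --- equivalently, this is an instance of the weak co-covering property of $\mathbf{PM2}$. Finiteness of $\mathcal{T}_n^2$ is clear, since there are at most $2^n$ depth-$1$ types and hence only finitely many antichains of them.

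\emph{Completeness: $\mathcal{T}_n^2\Vdash\varphi\Rightarrow\varphi\in\mathbf{PM2}$.} I would argue contrapositively. If $\varphi\notin\mathbf{PM2}$, then by Kripke completeness of $\mathbf{PM2}$ with respect to $\{\mathbb{V}_m\}$ the formula is refuted at some point of some finite $\mathbb{V}_k$-model; passing to the generated submodel, we may assume $\varphi$ is refuted at the root of a rooted finite $\mathbf{PM2}$-model $N\cong\langle\mathbb{V}_j,\upsilon\rangle$. The main step is then to prove, by induction on the depth of $N$ (which is $1$ or $2$), that there exist a point $b\in T_n^2$ and a valuation-preserving $p$-morphism $f\colon b^{R_n^2}\twoheadrightarrow N$; since $p$-morphisms preserve truth of all formulas, $b$ refutes $\varphi$ and so $\mathcal{T}_n^2\nVdash\varphi$. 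For depth $1$, $N$ is a single point of some type $\alpha\subseteq\{p_1,\dots,p_n\}$ and $b$ is the depth-$1$ point of $\mathcal{T}_n^1$ of type $\alpha$. For depth $2$ I would first pass to the reduct $N^\ast$ of $N$ obtained by identifying maximal points of equal type (the quotient $N\twoheadrightarrow N^\ast$ being a $p$-morphism, so it suffices to treat $N^\ast$), let $A\subseteq T_n^1$ be the set of depth-$1$ points matching the types of the maximal points of $N^\ast$, and take for $b$ the depth-$2$ co-covering of $A$ carrying the root type of $N^\ast$; the construction of the second layer guarantees this point exists unless it lies in $D$, in which case $N^\ast$ collapses further onto a single point and the base case applies. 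The map $f$ then sends $b$ to the root of $N^\ast$ and is the identity of $A$ onto the maximal points.

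The step I expect to be the main obstacle is this depth-$2$ induction step: one must verify that the combinatorial exclusion set $D$ removes exactly the co-coverings that are $p$-morphically redundant, so that the required depth-$2$ point is available precisely when the target model is not so reducible. This is what forces the preliminary reduction to $N^\ast$ (so that the candidate map is a genuine function onto the maximal points), and it also requires checking that the gluing of depth-$2$ points over shared depth-$1$ clusters inside $\mathcal{T}_n^2$ does not disturb the subscale $b^{R_n^2}$ used to build $f$. By contrast, the finiteness claim, the identification of each $b^{R_n^2}$ with a $\mathbb{V}_m$, and the back-and-forth conditions for $f$ should all be routine.
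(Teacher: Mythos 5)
Your proposal is correct and follows essentially the route the paper itself intends: the paper offers no written proof of this lemma, instead deferring to the finite model property of $\mathbf{PM2}$ and the $n$-characteristic-model constructions of Rybakov for logics of finite depth, and your two directions (soundness by checking that every point-generated subscale $b^{R_n^2}$ is a $\mathbf{PM2}$-scale via the weak co-covering property, completeness by building a valuation-preserving $p$-morphism from some $b^{R_n^2}$ onto a finite rooted countermodel, after first collapsing maximal points of equal type) are exactly the standard argument those references supply. You also correctly isolate the only delicate points --- the role of the exclusion set $D$ and the identification of depth-$2$ points over a shared antichain --- which the paper leaves implicit.
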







\begin{lemma}
The model $\mathcal{T}_{n}^{3}:= \langle T_{n}^{3}, R_{n}^{3}, \upsilon_{n}^{3} \rangle$ is an $n$-characterization for $\mathbf{PM3}$:
$$ \forall \varphi(p_1,p_2,\dots,p_n): \varphi\in\mathbf{PM3} \Leftrightarrow \mathcal{T}_{n}^{3}\Vdash \varphi.$$
\end{lemma}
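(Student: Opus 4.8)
The plan is to prove the biconditional exactly as Lemma~\ref{ncharPM2} is proved for $\mathbf{PM2}$, but carrying the layered construction one step further, down to depth~$3$. The two implications are handled by different tools: the implication $\varphi\in\mathbf{PM3}\Rightarrow\mathcal{T}_{n}^{3}\Vdash\varphi$ rests on a structural description of the point-generated submodels of $\mathcal{T}_{n}^{3}$, while the converse rests on the finite model property of $\mathbf{PM3}$ together with a $p$-morphism (``reproduction'') argument transferring refutations into $\mathcal{T}_{n}^{3}$.

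For the \emph{soundness} direction I would show, by induction on $l\in\{1,2,3\}$, that $\mathcal{T}_{n}^{l}$ is a finite open submodel of $\mathcal{T}_{n}^{l+1}$ whose underlying scale is a $\mathbf{PM3}$-scale in the sense of the characterisation recalled above: a finite partial order, all clusters singular, of depth at most $3$, possessing a largest cluster. The base case $\mathcal{T}_{n}^{1}$ is immediate. For the step one checks that the relation $R_{n}^{l+1}$ put on $Sl_{l+1}$ is reflexive, transitive and antisymmetric on clusters, that each new cluster has depth exactly $l+1$, and --- the point at which $\mathbf{PM3}$ departs from $\mathbf{PM2}$ --- that confluence (equivalently, the existence of a largest cluster in every rooted subscale) is preserved; this is precisely what the choice of eligible antichains $W_{l}$ and the removal set $D$ are designed to guarantee, and it is the manifestation of the weak co-covering property of $\mathbf{PM3}$. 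Since any $\varphi(p_{1},\dots,p_{n})\in\mathbf{PM3}$ is valid on every $\mathbf{PM3}$-scale, it is valid on every point-generated submodel of $\mathcal{T}_{n}^{3}$, hence $\mathcal{T}_{n}^{3}\Vdash\varphi$.

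For the \emph{completeness} direction I argue by contraposition. If $\varphi(p_{1},\dots,p_{n})\notin\mathbf{PM3}$, the finite model property gives a finite rooted $\mathbf{PM3}$-model $N$ with $\langle N,r\rangle\nVdash\varphi$ at its root $r$; since $N$ is built on a $\mathbf{PM3}$-scale, its depth is at most $3$, its clusters are singletons, it has a largest cluster, and only the values of $p_{1},\dots,p_{n}$ matter. It then suffices to exhibit a point $x\in T_{n}^{3}$ and an onto $p$-morphism of models $f\colon (\mathcal{T}_{n}^{3})_{x}\to N$ with $f(x)=r$, because a $p$-morphism reflects the truth of every formula in $p_{1},\dots,p_{n}$ and hence transfers the refutation, giving $\mathcal{T}_{n}^{3}\nVdash\varphi$. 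The map $f$ is built layer by layer: the depth-$1$ (largest) cluster of $N$ is matched to a depth-$1$ cluster of $\mathcal{T}_{n}^{1}$ carrying the same valuation; each depth-$2$ cluster of $N$, with its prescribed set of depth-$1$ successors, is matched to the cluster of $Sl_{2}$ erected over the corresponding antichain with the matching valuation (for these two lower layers this re-uses exactly the reasoning of Lemma~\ref{ncharPM2}); finally $r$ is matched to the cluster of $Sl_{3}$ erected over the antichain of those depth-$2$ clusters that realise the successors of $r$. One then verifies the back-and-forth clauses for $R_{n}^{3}$ and the agreement of valuations on $p_{1},\dots,p_{n}$, using the definition of $R_{n}^{3}$ and the fact that $D$ deletes only clusters that duplicate submodels already present, so nothing required by $N$ is absent.

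I expect the main obstacle to be the depth-$3$ step of the $p$-morphism: unlike in $\mathbf{PM2}$, the root $r$ sees a whole antichain of depth-$2$ clusters, each of them attached to the single largest cluster, and one must be certain that the cluster over precisely this antichain, carrying precisely the valuation demanded by $N$, actually occurs in $Sl_{3}$ (has not been absorbed into $D$) and behaves correctly under $R_{n}^{3}$. This calls for careful bookkeeping of which antichains belong to $W_{2}$ and of the valuations carried by the new clusters; once the correspondence between ``antichains with valuations'' inside $\mathcal{T}_{n}^{3}$ and rooted $\mathbf{PM3}$-submodels of depth at most $3$ is pinned down, the $p$-morphism conditions and both implications become routine verifications, and the statement follows.
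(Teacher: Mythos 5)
Your overall strategy --- the finite model property for one direction, plus a layerwise $p$-morphism from a generated submodel of $\mathcal{T}_{n}^{3}$ onto a finite rooted countermodel for the other --- is exactly the argument the paper intends: the paper supplies no proof of this lemma beyond citing Rybakov's method of $n$-characteristic models for logics of finite layers and the depth-2 reasoning behind Lemma~\ref{ncharPM2}. So in outline you are reconstructing the intended proof.

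There is, however, a genuine gap, and it sits precisely where you wave it through: the claim that confluence ``is preserved; this is precisely what the choice of eligible antichains $W_l$ and the removal set $D$ are designed to guarantee.'' For the construction as actually defined this is false. $W_l$ is the set of \emph{all} antichains containing at least one cluster of depth $l$, and $D$ deletes only clusters erected over singleton antichains whose cone duplicates an already-present submodel; nothing restricts a new cluster to antichains whose union of cones has a single maximal element. Take depth-1 clusters $a\Vdash p$ and $b\nVdash p$ and the depth-2 cluster $c$ erected over the antichain $\{a,b\}$ (which survives $D$): then $c\Vdash\Diamond\Box p$ via $a$ but $c\nVdash\Box\Diamond p$ via $b$, so the submodel generated by $c$ refutes the $\mathbf{PM3}$ axiom $\Box\Diamond p\leftrightarrow\Diamond\Box p$, and the soundness direction $\varphi\in\mathbf{PM3}\Rightarrow\mathcal{T}_{n}^{3}\Vdash\varphi$ fails for the unmodified construction (which is correct as stated only for $\mathbf{PM2}$). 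To make your induction work you must restrict the admissible antichains for $\mathbf{PM3}$: at stage $1\to 2$ only singleton antichains of depth-1 clusters, and at stage $2\to 3$ only antichains of depth-2 clusters lying over one and the same depth-1 cluster (optionally together with that cluster), so that every rooted subscale has a largest cluster and depth at most $3$. With that restriction made explicit your soundness step goes through, and your completeness step must then be rechecked to confirm that the restricted $Sl_2$ and $Sl_3$ still contain every cluster your $p$-morphism needs --- which they do, since a finite rooted $\mathbf{PM3}$-countermodel itself has a unique top. The rest of your sketch is, as you say, routine.
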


A set of clusters $W'\subseteq W$ called \textit{definable} in $M:= \langle F,\upsilon \rangle$, if there is a formula $\varphi(p_1,p_2,\dots,p_n)$, s.t. $\upsilon(\varphi(p_1,p_2,\dots,p_n)) = W'$.

\begin{lemma}
Any cluster of of the model $\mathcal{T}_{n}^{2}$ and $\mathcal{T}_{n}^{3}$ is definable.
\end{lemma}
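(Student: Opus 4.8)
The plan is to proceed by induction on the depth $l$ of the cluster, exhibiting for each cluster $x$ a formula $\beta_x(p_1,\dots,p_n)$ whose valuation is exactly $\{x\}$. First I would observe that every cluster of these $n$-characteristic models is singular (as noted before Lemma~\ref{ncharPM2}), and that by construction two distinct clusters of the same depth are distinguished by their valuations together with the set of clusters they see: this is precisely what the deletion set $D$ guarantees when passing from $\mathcal{T}_n^l$ to $\mathcal{T}_n^{l+1}$, so no two clusters have the same "downward colour pattern." This separation property is the semantic fact that makes definability possible.

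For the base case $l=1$: a cluster $p_i$ in $S_1(\mathcal{T}_n^i)$ — more precisely in the bottom layer — is characterised by which of the $p_j$ hold there. Since each bottom cluster has a distinct valuation (it is $\upsilon_n^1(p_i)$ by construction), the conjunction $\gamma_i := \bigwedge_{p_j \in \upsilon(p_i)} p_j \wedge \bigwedge_{p_j \notin \upsilon(p_i)} \neg p_j$ holds at $p_i$; to pin it down to a single cluster I would add a depth-$1$ clause, e.g. conjoin $\Box(\gamma_i \vee \bigvee_{j} \Diamond\text{-stuff})$, or more cleanly use $\beta_{p_i} := \gamma_i \wedge \Box(\bigvee_{\text{bottom }q} \gamma_q)$ adjusted so that only reflexive depth-$1$ points satisfy it; since a depth-$1$ cluster sees only itself, $\Box\gamma_i$ already isolates it among depth-$1$ clusters, and the finitely many deeper clusters where $\gamma_i$ might accidentally hold are excluded because they see a strictly larger set of valuation-patterns below.

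For the inductive step, suppose every cluster of depth $\le l$ is definable, say cluster $y$ by $\beta_y$. A cluster $x$ of depth $l+1$ is, by the construction of $\overline{R^{l+1}}$, determined by: (i) its own valuation pattern $\gamma_{u_2(u_2(x))}$, and (ii) the antichain $u_1(x) \in W_l$ of clusters of depth $\le l$ that it properly sees. Using the induction hypothesis, the set $u_1(x)^R$ is definable by $\Delta_x := \bigvee_{y \in u_1(x)^R}\beta_y$, so I would set
\[
\beta_x := \gamma_{u_2(u_2(x))} \wedge \Diamond\beta_{y_1}\wedge\cdots\wedge\Diamond\beta_{y_k} \wedge \Box\bigl(\gamma_{u_2(u_2(x))} \vee \Delta_x\bigr),
\]
where $y_1,\dots,y_k$ enumerate the maximal elements of $u_1(x)$. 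The first conjunct fixes the colour at $x$; the $\Diamond\beta_{y_i}$ conjuncts force $x$ to see each generator of the antichain; the final $\Box$-clause forbids seeing anything outside $\{x\}\cup u_1(x)^R$. Any cluster satisfying $\beta_x$ must have the same valuation and see exactly the same antichain as $x$, hence by the separation property equals $x$.

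The main obstacle I expect is the verification that $\beta_x$ is not satisfied by any \emph{unintended} cluster — in particular, one of \emph{higher} depth that happens to have colour $\gamma_{u_2(u_2(x))}$ and sees a superset antichain. The $\Box$-clause handles the superset issue only if $\Delta_x$ really captures $u_1(x)^R$ and nothing more, which is where the precise role of the deleted set $D$ (ensuring $\langle\nabla\rangle$-type coincidences are removed) and the induction hypothesis for \emph{lower} clusters must be combined carefully; I would treat this case split as the technical heart of the argument, and note that for $\mathbf{PM2}$ it is genuinely short (only two layers, antichains sit at the bottom) while for $\mathbf{PM3}$ one must also account for the largest cluster of the first layer in $\mathbb{U}_{m+1}$, handled as a degenerate antichain.
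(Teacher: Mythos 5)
Your overall strategy (induction on depth, pinning each cluster down by its colour and the antichain it covers) is the right one --- the paper itself gives no argument, deferring to Lemma~2 of Rybakov's work on logics of finite layers, and that lemma is proved exactly by such an induction. Your base case is also sound: for a depth-$1$ cluster of colour $\gamma_i$ the formula $\Box\gamma_i$ works, because depth-$1$ clusters have pairwise distinct colours and the deletion set $D$ forbids a cluster covering a singleton antichain from sharing the colour of that singleton.

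The inductive step, however, has a genuine gap, and it is not quite where you located it. Your clause $\Box\bigl(\gamma_{u_2(u_2(x))} \vee \Delta_x\bigr)$ fails to exclude impostors already at the \emph{same} depth as $x$, inside $\mathcal{T}_n^2$. Concretely: let $x$ be the depth-$2$ cluster of colour $c$ over the antichain $\{a,b\}$, and let $d$ be the depth-$1$ cluster whose colour is also $c$ (it exists, since the first layer realises every colour, and $d\notin\{a,b\}$ is possible). Then the depth-$2$ cluster $z$ of colour $c$ over $\{a,b,d\}$ satisfies your $\beta_x$: it has colour $c$, it sees $a$ and $b$, and every successor of $z$ --- namely $z$, $a$, $b$, $d$ --- satisfies $\gamma_c \vee \beta_a \vee \beta_b$, because the extra successor $d$ slips through the $\gamma_c$ disjunct. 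A similar construction with a depth-$2$ cluster of colour $c$ over a sub-antichain produces depth-$3$ impostors in $\mathcal{T}_n^3$. The repair is standard but essential: since by the induction hypothesis every cluster of depth $\le l$ is definable, the finite disjunction $\Lambda_l := \bigvee_{\mathrm{depth}(y)\le l}\beta_y$ defines the layer $Sl_l$, and the guard clause should be $\neg\Lambda_l \wedge \Box\bigl(\neg\Lambda_l \vee \bigvee_{y\in u_1(x)^R}\beta_y\bigr)$ rather than $\Box(\gamma_c\vee\Delta_x)$; this forces any cluster satisfying $\beta_x$ to have depth exactly $l+1$ and to cover exactly the antichain $u_1(x)$, after which uniqueness of the cluster with a given colour over a given antichain (built into $Sl_{l+1}\subseteq W_l\times C$) finishes the argument. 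Without this replacement the claimed separation property is not actually enforced by your formula.
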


This lemma can be reformulated as follows:

\textit{For any cluster $w\in \mathcal{T}_{n}^{j}$ there is $\varphi(p_1,p_2\dots,p_n)$, s.t. $\forall s\in \mathcal{T}_{n}^{j}$:}
$$ \langle \mathcal{T}_{n}^{j},s\rangle \Vdash \varphi \Leftrightarrow s=w.$$

The proof of the lemma is a special case of Lemma 2 from \cite{R1984FL}.

\subsection{Reduced normal form.}

For the following construction of Kripke models of a special form, we define the \textit{reduced normal form} (\textit{rnf}, for short) of formulas and show that to solve the unification problem it is sufficient to consider only formulas given in the rnf.

By the scale $\mathbb{F}_m$ in this and the next section we mean cases of scales $\mathbb{V}_m$ (for $\mathbf{PM2}$) or $\mathbb{U}_m$ (for $\mathbf{PM3}$).
 
A formula $\varphi(x_1,\dots,x_n)$ called \textit{given in rnf}, if:

\begin{equation}\label{rnf}
\varphi := \bigvee_{1 \leq j \leq k} (\bigwedge_{1\leq i \leq n} [x_i^{t(j,i,0)}\wedge (\Diamond x_i)^{t(j,i,1)}]),
\end{equation}
where $x_i$ is a variables of $\varphi$, $t(j,i,z)\in\{0,1\}$ and $\forall \alpha := \{x_i, \Diamond x_i \}$:

\begin{equation*}
\alpha^t := 
\begin{cases}
\alpha &\text{if $t=0$};\\
\bar{\alpha} &\text{if $t=0$.}
\end{cases}
\end{equation*}

A formula $\varphi_{rf}$ called \textit{rnf} for $\varphi$, if the following conditions hold:
\begin{enumerate}
	\item $\varphi$ has the form (\ref{rnf});
	\item $Var(\varphi) \subseteq Var(\varphi_{rf})$;
	\item For any scale $\mathbb{F}_m:=\langle W_m,R \rangle$, cluster $a\in W_m$ and valuation $\upsilon$ of variables of $\varphi$ on the scale $\mathbb{F}_m$, if $\langle \mathbb{F}_m,a \rangle \Vdash_{\upsilon} \varphi$, then there is such an extension $\upsilon_1$ of the valuation $\upsilon$ by additional variables of $\varphi_{rf}$, s.t. $\langle \mathbb{F}_m,a \rangle\Vdash_{\upsilon_1} \varphi_{rf}$;
	\item For any scale $\mathbb{F}_m:=\langle W_m,R \rangle$, cluster $a\in W_m$ and valuation $\upsilon_1$ of variables of $\varphi_{rf}$ on the scale $\mathbb{F_m}$ if $\langle \mathbb{F}_m,a \rangle \Vdash_{\upsilon_1} \varphi_{rf}$, then $\forall b\in W_m$ $\langle \mathbb{F}_m,a \rangle \Vdash_{\upsilon} \varphi$, where $\upsilon$ is the limitation of valuation $\upsilon_1$ to variables of $\varphi$.
\end{enumerate}

The following theorem is known:

\begin{theorem} \label{RNF}\cite{Rbook}
There is an algorithm that for exponential time for any given formula $\varphi$ builds its rnf $\varphi_{rf}$.
\end{theorem}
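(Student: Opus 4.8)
The plan is to produce $\varphi_{rf}$ from $\varphi$ in three transformations — reduction of the modal degree, passage to a disjunction of full $1$-types, and a routine bookkeeping of the auxiliary variables — and then to verify conditions 1--4 and the exponential time bound. \emph{Reducing the modal degree.} First I would rewrite $\varphi$ so that no modality is nested inside another and, ultimately, so that every modality binds a single variable. Using the $\mathcal{S}4$-validities $\Box\Box\chi\equiv\Box\chi$, $\Diamond\Diamond\chi\equiv\Diamond\chi$ and the distribution of $\Diamond$ over $\vee$, bring each modal subformula into the shape $\Diamond\delta$ with $\delta$ modal-free; then, from the innermost subformulas outward, for each $\delta$ introduce a fresh variable $z_\delta$, replace $\Diamond\delta$ by $\Diamond z_\delta$, and adjoin the defining constraint ``$z_\delta$ is equivalent to $\delta$ at every cluster reachable from the current one''. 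These adjunctions are conservative for unification — a unifier of $\varphi$ extends to one of the enlarged formula by $z_\delta\mapsto\delta$, and the enlarged formula entails $\varphi$ — and, carried out jointly for all the relevant modal-free combinations, they leave a formula $\psi$ whose only modal atoms are of the form $\Diamond x_i$.

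\emph{Passage to full $1$-types.} Next I would run over the $2^{2m}$ assignments of truth values to the atoms $x_1,\dots,x_m,\Diamond x_1,\dots,\Diamond x_m$ (where $m$ is the number of variables produced by the previous step), keep those that propositionally verify $\psi$ and are consistent with the defining constraints, write each retained assignment as the corresponding full conjunct $\bigwedge_i[x_i^{t_0}\wedge(\Diamond x_i)^{t_1}]$, and disjoin them. The result $\varphi_{rf}$ has exactly the shape (\ref{rnf}); equivalently one can first put $\psi$ in disjunctive normal form and then complete each conjunct by the usual $(\dots)\equiv(\dots\wedge x_i)\vee(\dots\wedge\neg x_i)$ expansion.

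\emph{Verification.} Condition 1 holds by construction and condition 2 because only fresh variables were introduced. For condition 3, given $\upsilon$ with $\langle\mathbb{F}_m,a\rangle\Vdash_\upsilon\varphi$, extend $\upsilon$ to $\upsilon_1$ by interpreting each new variable as the truth set of the modal-free formula it abstracts; reflexivity and transitivity of $R$ on $\mathbb{F}_m$ then show, by induction on the order of abstraction, that all defining constraints and hence $\psi$ hold at $a$ under $\upsilon_1$, so $\langle\mathbb{F}_m,a\rangle\Vdash_{\upsilon_1}\varphi_{rf}$. For condition 4, if $\langle\mathbb{F}_m,a\rangle\Vdash_{\upsilon_1}\varphi_{rf}$ then $\upsilon_1$ realises one of the retained full conjuncts, which — having survived the consistency check — forces each $z_\delta$ to agree with $\delta$ throughout the submodel generated by $a$; substituting back turns $\psi$ into $\varphi$, so $\langle\mathbb{F}_m,a\rangle\Vdash_\upsilon\varphi$ for the restriction $\upsilon$ of $\upsilon_1$.

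\emph{Complexity and the main obstacle.} Reducing the modal degree is polynomial in $|\varphi|$ and adds at most polynomially many variables, and inspecting the $2^{2m}$ candidate full conjuncts is single-exponential; hence the whole procedure runs in exponential time, which is the claim. I expect the genuinely delicate point to be the relational soundness of the abstraction used in condition 4 — showing that a new variable, tied to its definition only through a depth-one constraint, really does track that definition at \emph{every} cluster seen from $a$, not merely at $a$. This is exactly where the bounded depth of the pretabular scales $\mathbb{V}_m$, $\mathbb{U}_m$ is essential (it keeps the iterated abstraction from running away and lets the finitely many $\Diamond x_i$-literals at $a$ pin down the relevant behaviour of the successor clusters), and it is the reason conditions 3--4 are stated relative to the class $\mathbb{F}_m$ rather than to all reflexive transitive scales.
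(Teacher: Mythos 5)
The first thing to say is that the paper contains no proof of this statement: Theorem~\ref{RNF} is imported wholesale from Rybakov's monograph \cite{Rbook} (the reduced-form machinery of Section~3.1 there), so there is no in-paper argument to measure yours against. Your reconstruction does follow the standard route --- abstract subformulas by fresh variables with defining constraints, then expand into a disjunction of full conjuncts over the literals $x_i$, $\Diamond x_i$ --- and the exponential-time accounting is fine. But two of your steps do not hold up as written.

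The opening move, ``using $\Box\Box\chi\equiv\Box\chi$, $\Diamond\Diamond\chi\equiv\Diamond\chi$ and distribution of $\Diamond$ over $\vee$, bring each modal subformula into the shape $\Diamond\delta$ with $\delta$ modal-free,'' is false: $\mathcal{S}4$ does not reduce modal degree up to logical equivalence. For instance $\Diamond\Box p$ is not $\mathcal{S}4$-equivalent to any Boolean combination of $p$, $\Diamond p$, $\Diamond\neg p$ (take a root seeing one $p$-leaf and one $\neg p$-leaf versus a root seeing a two-element cluster with $p$ true at one point: the roots agree on all degree-$1$ formulas but disagree on $\Diamond\Box p$). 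The impossibility of such a reduction is precisely why the rnf theorem is stated with fresh variables and the weaker semantic conditions 3--4 rather than as a logical equivalence. The step is also unnecessary --- if you abstract innermost modal subformulas first, their arguments are already modal-free --- so it should simply be deleted and the fresh-variable abstraction left to do all the work. The more serious problem is the second stage: your defining constraint for $z_\delta$ is $\Box(z_\delta\leftrightarrow\delta)$, which is a box over a compound formula and hence not of the shape (\ref{rnf}), and nothing in your ``passage to full $1$-types'' explains how such constraints are encoded into, and later recovered from, a disjunction of conjunctions of $\{x_i,\Diamond x_i\}$-literals. A propositional consistency check on a single retained conjunct enforces $z_\delta\leftrightarrow\delta$ only at the distinguished cluster $a$; propagating it to every cluster $R$-accessible from $a$ --- which you yourself flag as ``the genuinely delicate point'' --- is exactly the content of conditions 3--4, and it is asserted rather than proved. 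In Rybakov's treatment this is where the argument does real work (roughly, the disjunction of full conjuncts must be required to hold throughout the generated submodel, not merely at $a$, and the $\Diamond$-literals of the retained conjuncts must be checked for cross-cluster coherence), so as it stands your proof has a genuine hole at its central point rather than a routine verification at its periphery.
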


By virtue of the theorem \ref{RNF} the following is true

\begin{lemma}\label{unif_rf}
For the logic $\mathcal{L}\in \{\mathbf{PM2},\mathbf{PM3}\}$ the following statements are true:

\begin{enumerate}
	\item $\varphi$ is unifiable in $\mathcal{L}  \Leftrightarrow  \varphi_{rf}$ is unifiable in $\mathcal{L}$;
	\item if $\sigma$ is an unifier for $\varphi$ in $\mathcal{L}$, then some extension $\sigma$ to additional variables of $\varphi_{rf}$ is an unifier for $\varphi_{rf}$ in $\mathcal{L}$;	\item if $\sigma$ is an unifier for $\varphi_{rf}$ in $\mathcal{L}$, then limitation $\sigma$ to the variables of formula $\varphi$ is an unifier for $\varphi$ in $\mathcal{L}$.
\end{enumerate}
	
\end{lemma}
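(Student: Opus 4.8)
\textbf{Proof plan for Lemma~\ref{unif_rf}.}
The plan is to derive all three statements from the defining properties (1)--(4) of the reduced normal form together with Theorem~\ref{RNF} and the fact (Lemmas~\ref{ncharPM2} and its $\mathbf{PM3}$ analogue) that $\mathcal{L}\in\{\mathbf{PM2},\mathbf{PM3}\}$ is complete with respect to its finite rooted scales $\mathbb{F}_m$. The key observation is that property (3) says every valuation making $\varphi$ true at a point extends to a valuation making $\varphi_{rf}$ true at that point, and property (4) says the converse: $\varphi_{rf}$ true at a point forces $\varphi$ true there under the restricted valuation. Thus $\varphi$ and $\varphi_{rf}$ are, over the characteristic scales, \emph{equivalent up to the extra variables}, and unifiability is a property that only depends on truth over those scales.

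First I would prove statement (3) of the lemma, which is the cleanest. Suppose $\sigma$ is a unifier for $\varphi_{rf}$ in $\mathcal{L}$, so $\sigma(\varphi_{rf})\in\mathcal{L}$. Let $\sigma'$ be the restriction of $\sigma$ to $Var(\varphi)$; I must show $\sigma'(\varphi)\in\mathcal{L}$. By completeness it suffices to check that $\sigma'(\varphi)$ is valid at every cluster $a$ of every characteristic scale $\mathbb{F}_m$ under every valuation $\upsilon$. Applying $\sigma'$ is the same as choosing a valuation $\upsilon$ for $Var(\varphi)$ via $\upsilon(x_i):=\upsilon(\sigma'(x_i))$, and similarly $\sigma$ induces a valuation $\upsilon_1$ for $Var(\varphi_{rf})$ extending $\upsilon$ (set $\upsilon_1(y):=\upsilon(\sigma(y))$ for the extra variables $y$). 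Since $\sigma(\varphi_{rf})\in\mathcal{L}$, we have $\langle\mathbb{F}_m,a\rangle\Vdash_{\upsilon_1}\varphi_{rf}$; property (4) then yields $\langle\mathbb{F}_m,a\rangle\Vdash_{\upsilon}\varphi$, i.e. $\langle\mathbb{F}_m,a\rangle\Vdash\sigma'(\varphi)$. As $a$, $\mathbb{F}_m$ and $\upsilon$ were arbitrary, $\sigma'(\varphi)\in\mathcal{L}$.

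Next, for statement (2): given a unifier $\sigma$ for $\varphi$, I would extend it to the additional variables of $\varphi_{rf}$ and show the extension unifies $\varphi_{rf}$. The natural choice is to use property (3) at the semantic level. Since $\sigma(\varphi)\in\mathcal{L}$, the formula $\sigma(\varphi)$ is a theorem, and I want to build an extension $\widehat\sigma$ with $\widehat\sigma(\varphi_{rf})\in\mathcal{L}$; here one invokes the standard fact that an rnf can be chosen so that the extra variables are definable combinations of the original ones over the intended scales (this is how rnf's are produced in \cite{Rbook,R1984FL}), so one defines $\widehat\sigma(y)$ for each extra variable $y$ to be the corresponding Boolean/modal combination of the $\sigma(x_i)$. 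Property (3), read substitutionally, guarantees that with this choice $\widehat\sigma(\varphi_{rf})$ becomes valid at every point of every $\mathbb{F}_m$ under every valuation, hence $\widehat\sigma(\varphi_{rf})\in\mathcal{L}$ by completeness. Statement (1) is then immediate: the forward direction is (2) (a unifier for $\varphi$ gives one for $\varphi_{rf}$), and the backward direction is (3) (a unifier for $\varphi_{rf}$ restricts to one for $\varphi$).

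The main obstacle is statement (2), specifically making precise \emph{how} the extra variables of $\varphi_{rf}$ are to be substituted. Property (3) as stated only asserts, for each fixed valuation, the \emph{existence} of an extending valuation; to turn a unifier $\sigma$ of $\varphi$ into a unifier of $\varphi_{rf}$ one needs this extension to be given \emph{uniformly} by a substitution, i.e. one needs the stronger structural fact about the rnf construction that the extra variables can be taken to abbreviate explicit formulas in the original variables. I would handle this by appealing to the explicit form of the rnf algorithm of Theorem~\ref{RNF} (as in \cite{Rbook}), where each new variable is introduced precisely as a name for a subformula, so that $\widehat\sigma(y):=\,$that subformula with $x_i$ replaced by $\sigma(x_i)$ does the job, and then properties (3)--(4) merely certify that the resulting substitution is indeed a unifier. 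Everything else is routine bookkeeping with valuations and the finite model property.
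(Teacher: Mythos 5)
Your argument is correct, and it is worth noting that the paper itself gives essentially no proof here: it simply declares the lemma to be a special case of Lemma 4 of \cite{R2011cu}, invoking Kripke completeness of $\mathbf{PM2}$ and $\mathbf{PM3}$. What you have written is a self-contained reconstruction of the standard argument underlying that citation. Your treatment of statement (3) --- converting the substitution $\sigma$ into a valuation $\upsilon_1(y):=\upsilon(\sigma(y))$, applying defining property (4) of the rnf, and then quoting completeness with respect to the scales $\mathbb{F}_m$ --- is exactly right, and statement (1) does follow from (2) and (3). You also correctly put your finger on the one delicate point: property (3) of the rnf definition only asserts the pointwise \emph{existence} of an extending valuation, which by itself is too weak to yield the uniform substitution required in statement (2); the fix is, as you say, the structural fact from the rnf algorithm of Theorem~\ref{RNF} (\cite{Rbook}) that each fresh variable is introduced as a name for an explicit subformula, so that $\widehat\sigma(y)$ can be taken to be that subformula with $\sigma$ applied to the original variables. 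The only refinement I would ask for is to make explicit that the validity of $\widehat\sigma(\varphi_{rf})$ on all $\mathbb{F}_m$ comes from the defining equivalences built into the rnf construction rather than from property (3) as literally stated --- but that is a presentational matter, not a gap, and your write-up already acknowledges it.
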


This lemma is a special case of Lemma 4 of \cite{R2011cu}, by virtue of the Kripke completeness of logics $\mathbf{PM2},\mathbf{PM3}$.

By Lemma 
9, it is sufficient to consider rnf of the formulas instead of themselves for the study of the unification problem, if it will be convenient. Let us further consider special Kripke models constructed using rnf.

Let $\varphi$ be a formula and 
$\varphi_{rf}:=\bigvee_{j\in J} \varphi_j$ be its rnf, 
in accordance with the Theorem \ref{RNF}, 
where $\varphi_j := \bigwedge_{1\leq i \leq n} [x_i^{t(j,i,0)}\wedge (\Diamond x_i)^{t(j,i,1)}]$. 
Let $M(\varphi_{rf}):=\langle W, R,\upsilon \rangle$ be a model, 
built on the basic set $W:=\{\varphi_j\}_{j\in J}$ 
with a valuation $\upsilon$ of all variables of $\psi$, 
given as follows: $\upsilon(x_i)$ is a set of all  
$\varphi_j \in W$, in which $x_i$, 
as free by $\Diamond$ variable, only positive. 
We define the accessibility relation $R$ on the model $M(\varphi_{rf})$, 
using two novel sets:
\begin{itemize}
	\item $\forall \varphi_j: \Theta_1(\varphi_j)$ 
	is a set of all variables, that occur in $\varphi_j$ 
	without negation and free from the operator $\Diamond$;
	\item $\forall \varphi_j: \Theta_2(\varphi_j)$ 
	is a set of all variables, that occur in $\varphi_j$ 
	without negation and bound by the operator $\Diamond$ 
	(i.e. $\Diamond x$).
\end{itemize}

Then 
$\forall M(\varphi_{rf}): \varphi_i R \varphi_j \Leftrightarrow \Theta_2(\varphi_j)\subseteq \Theta_2(\varphi_i)$.

\subsection{Complete sets of unifiers in $\bm{\mathbf{PM2},\mathbf{PM3}}$.}

The arguments proposed below for constructing special kind models are correct for the case of both logics under consideration, taking into account the fact that the corresponding initial models are $\mathcal{T}_{n}^{2}$ for $\mathbf{PM2}$ or $\mathcal{T}_{n}^{3}$ for $\mathbf{PM3}$. Therefore, in this section we give reasoning only for the case of $\mathbf{PM2}$, for $\mathbf{PM3}$ they can be obtained exactly the same way.

\begin{lemma}
Let $\varphi_{rf}:= \bigvee_{j\in J} \varphi_j$ be a formula, given in rnf. If $\varphi_{rf}$ is unifiable in $\mathbf{PM2}$, 
and $\sigma$ is its unifier, 
then there is a model $M(\varphi_{rf})$ on some subformulas $\varphi_j$ 
from a disjunction as clusters of a basic set
$W:=\{\varphi_j\}_{j\in J}$, s.t. 
$\bigvee_{\varphi_j\in M(\varphi)} \sigma(\varphi_j) \in \mathbf{PM2}$ 
and the following is true:
\begin{enumerate}
	\item $\Theta_1(\varphi_j) \subseteq \Theta_2(\varphi_j), \varphi_j\in M(\varphi);$
	\item $\forall \varphi_j\in W, \varphi_j\Vdash_V \varphi_j; $
	\item $\forall D\subseteq W, \exists\varphi_{D,j}\in W:$
	$$ \Theta_2(\varphi_{D,j}) = [\Theta_1(\varphi_{D,j}) \cup \bigcup_{\varphi_j\in D} \Theta_2(\varphi_j)]. $$
\end{enumerate}
\end{lemma}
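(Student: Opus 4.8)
The plan is to extract the required model $M(\varphi_{rf})$ directly from a given unifier $\sigma$ by using the $n$-characteristic model $\mathcal{T}_n^2$ for $\mathbf{PM2}$ (Lemma~\ref{ncharPM2}) as a ``reading device.'' First I would observe that since $\sigma$ is a unifier, $\sigma(\varphi_{rf})=\bigvee_{j\in J}\sigma(\varphi_j)\in\mathbf{PM2}$, so by $n$-characterization this disjunction is valid at every cluster of $\mathcal{T}_n^2$. For each cluster $w$ of $\mathcal{T}_n^2$ at least one disjunct $\sigma(\varphi_j)$ holds at $w$; collect the indices $j$ that actually get used in this way and let $W$ be the set of those $\varphi_j$. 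The relation $R$ on $W$ is then the one already prescribed in the excerpt, $\varphi_i R\varphi_j\Leftrightarrow\Theta_2(\varphi_j)\subseteq\Theta_2(\varphi_i)$, and the valuation $\upsilon$ is the prescribed one ($\upsilon(x_i)$ = the $\varphi_j$ in which $x_i$ occurs positively and free from $\Diamond$). Because we kept only disjuncts that are realized somewhere in $\mathcal{T}_n^2$, the disjunction $\bigvee_{\varphi_j\in W}\sigma(\varphi_j)$ is still valid on all of $\mathcal{T}_n^2$, hence lies in $\mathbf{PM2}$ again by Lemma~\ref{ncharPM2}; this gives the displayed conclusion $\bigvee_{\varphi_j\in M(\varphi_{rf})}\sigma(\varphi_j)\in\mathbf{PM2}$.

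Next I would verify properties (1)--(3) one at a time. For (2), $\varphi_j\Vdash_V\varphi_j$ is essentially immediate from the definition of the valuation $\upsilon$ on $M(\varphi_{rf})$: at the cluster $\varphi_j$, each literal $x_i$ with $t(j,i,0)=0$ is true by construction of $\upsilon(x_i)$, each $\Diamond x_i$ with $t(j,i,1)=0$ is forced by the $R$-successors chosen via $\Theta_2$, and the negated literals are handled dually using that all clusters of $\mathbb{V}_m$ are singular; so one just unwinds the rnf shape~(\ref{rnf}). For (1), I would argue that if some $\varphi_j$ with $x_i\in\Theta_1(\varphi_j)$ but $x_i\notin\Theta_2(\varphi_j)$ survived, then at the corresponding cluster in $\mathcal{T}_n^2$ realizing $\sigma(\varphi_j)$ we would have $\sigma(x_i)$ true but $\Diamond\sigma(x_i)$ false, which in a reflexive scale is impossible — so such a disjunct can never be the one realized, and is therefore not in $W$. (In rnf terms, $x_i\wedge\neg\Diamond x_i$ is $\mathbf{PM2}$-inconsistent.) Property (3) is the closure condition: for every $D\subseteq W$ there must be a disjunct $\varphi_{D,j}\in W$ whose $\Diamond$-type $\Theta_2$ equals $\Theta_1(\varphi_{D,j})\cup\bigcup_{\varphi_j\in D}\Theta_2(\varphi_j)$. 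Here I would use the weak co-covering property of $\mathbf{PM2}$: given the antichain of clusters of $\mathcal{T}_n^2$ realizing the members of $D$, one adjoins a new reflexive co-covering cluster $c$; the disjunct of $\sigma(\varphi_{rf})$ realized at $c$ is forced (by reflexivity and by the accessibility of all of $D$) to have exactly the $\Diamond$-type $\Theta_1\cup\bigcup_{D}\Theta_2$, and it lies in $W$ because $c$ sits in a legitimate $\mathbf{PM2}$-scale over which $\sigma(\varphi_{rf})$ is still valid, hence the construction of $\mathcal{T}_n^2$ already contained an isomorphic copy of that situation.

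The main obstacle I expect is property (3), and specifically making the co-covering argument airtight within the bounded-depth setting: unlike $\mathbf{S}4$, one cannot freely add co-coverings at arbitrary depth, and one must check that the new cluster $c$ together with $\bigcup_{c\in\mathcal{X}^R}c^R$ is still an admissible $\mathbb{V}_m$-scale (depth $\le 2$, singular clusters, the ``$x=0$ or $x=y$'' accessibility pattern). This is exactly what the \emph{weak} co-covering property of $\mathbf{PM2}$ from \cite{RK2013} is tailored to deliver, so the argument should go through, but the depth bookkeeping — confirming that $D$ lies in the right layer so that its co-covering does not exceed depth $2$ — is where the care is needed. The reflexivity of the scales is what ties (1) and (2) together cleanly; the rest is routine unwinding of the rnf definition and the $n$-characteristic model construction from \cite{R1984FL}. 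For $\mathbf{PM3}$ the same steps apply verbatim with $\mathcal{T}_n^3$ in place of $\mathcal{T}_n^2$ and with depth bound $3$, as the excerpt already notes.
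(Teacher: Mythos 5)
Your proposal follows essentially the same route as the paper: both read the unifier off the $n$-characteristic model $\mathcal{T}_r^2$, take $W$ to be the set of disjuncts realized somewhere under the induced valuation, get (1) from reflexivity, (2) by unwinding the rnf shape and the $\Theta_2$-based accessibility, and (3) from the fact that the characteristic model is built by adjoining co-covers of antichains (the paper asserts the existence of the co-covering cluster $\varphi_{D,j}$ in $\mathcal{T}_r^2$ directly, which is the same content as your appeal to weak co-covering). The depth bookkeeping you flag for (3) is indeed the delicate point, and the paper likewise passes over it by assertion, so your account matches the paper's proof in both substance and level of detail.
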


\begin{proof}
Because the formula $\varphi(p_1,\dots, p_{n})$ is unifiable, 
$\sigma$ is its unifier, 
there is a valuation $S$ of variables $p_1,\dots, p_{n}$ on the scale of model $\mathcal{T}_{r}^{2}$, constructed in accordance with the Lemma \ref{ncharPM2} for finite $r = |Var(\sigma(\varphi))|$, s.t. 
$\langle \mathcal{T}_{r}^{2} \rangle\Vdash_{S} \varphi$. Let $X$ be a set of all $\varphi_j$, 
taken from a disjunction, s.t. $S(\varphi_j)\neq \varnothing$.

(1.)
It's obvious that
$\forall \varphi_j \in X$: $\Theta_1(\varphi_j) \subseteq \Theta_2(\varphi_j)$, 
due to the reflexivity of the model $\mathcal{T}_{r}^{2}$.

(2.) Note that in 
$\varphi_j := \bigwedge_{1\leq i \leq n} [{p_i}^{t(j,i,0)}\wedge (\Diamond {p_i})^{t(j,i,1)}]$
the first --- non-modal --- part of the conjunction $\varphi_j$ is valid,
i.e. $\forall \varphi_j\in W, \varphi_j \Vdash_S {p_i}^{t(j,i,0)}$. 
Let $\Diamond p_i$ be the conjunction member of $\varphi_j$. 
By definition of $X$, $\exists a \in S(\varphi_j)$ 
and some cluster $b\in W$, s.t. $a R_{r}^2 b$, 
$a \Vdash_S \Diamond p_i$ and $b \Vdash_S p_i$. 
In that case, $b\in S(\varphi_k)$ for some $k$, 
which means $p_i \in \Theta_1(\varphi_k)$ 
and $\varphi_k \Vdash_S p_i$. 
We show that $\varphi_j R_{r}^2 \varphi_k$. 
If $p_t \in \Theta_2(\varphi_j)$, 
then $b \Vdash_S p_t$, 
that means $a \Vdash_S \Diamond p_t$ 
and $p_t \in S(\varphi_k)$.

Hence, $\varphi_j R_{r}^2 \varphi_k$ 
and  $\varphi_j \Vdash_S \Diamond p_i$. 
Conversely, let $\varphi_j \Vdash_S \Diamond p_i$, 
then there is $\varphi_k \in X$, s.t.
$\varphi_j R_{r}^2 \varphi_k$ 
and $\varphi_k \Vdash_S p_i$. 
Then $p_i \in \Theta_1(\varphi_k)$ 
and $ p_i \in \Theta_2(\varphi_j)$, 
i.e. $\Diamond  p_i$ is the conjunction member of $\varphi_j$.

(3.) Let $D\subseteq X$, and $e_j\in S(\varphi_j)$ 
be fixed representative for each $\varphi_j\in D$. 
Then we put $e(D):=\{e_j | e_j\in S(\varphi_j), \varphi_j\in D\}$. 
Due to the finiteness of the model $\mathcal{T}_{r}^{2}$, 
the set $e(D)$ is also finite. 
There is such an element $\varphi_{D,j}\in W$, that 

$$ \varphi_{D,j}^{\leq}:= \{b | b\in {\mathcal{T}_{r}^{2}},  \varphi_{D,j} R_{r}^2 b \} = \{ \varphi_{D,j}\} \cup\bigcup \{e_j^{\leq} | e_j \in e(D) \}.
$$

Then $\varphi_{D,j}\in S(\varphi_l)$ for some $\varphi_l\in X$, 
which has all the necessary properties $\varphi_{D,j}$.

\end{proof}

Suppose that for an arbitrary unifiable formula $\varphi_{rf}:= \bigvee_{j\in J} \varphi_j$, $SM(\varphi_{rf},\mathbb{V}_m)$ is a set of all models on sub-formulas $\varphi_j$ as a basic set, satisfying conditions (1)--(3). For any model $M\in SM(\varphi,\mathbb{V}_m)$ and formulas $\varphi_j\in M$ define:

\begin{equation*}
\gamma_j := \varphi_j \wedge (\Box\bigvee_{\varphi_j R \varphi_k, \varphi_k\in M} \varphi_k); \hspace{1cm}
\gamma(M):=\bigvee_{\varphi_j\in M} \gamma_j.
\end{equation*}

Consider the model $Ch(\mathbf{PM2})_k$, where $k=|Var(\varphi)|$,
with valuation $\upsilon$, as well as a sub-model $\upsilon(\gamma(M))$ on those clusters of the model $Ch(\mathbf{PM2})_k$, where $\gamma(M)$ is true wrt the valuation $\upsilon$. Then $\varphi$ is true on the constructed sub-model $\upsilon(\gamma(M))$ wrt valuation $\upsilon$. Define the valuation $\Upsilon$ as follows:

\begin{equation*}
\Upsilon(x_i):= \gamma(M) \wedge \bigvee_{\varphi_j\in M} \varphi_j.
\end{equation*}

Defined valuation $\Upsilon$ for variables of the formula $\varphi$ coincides with valuation $\upsilon$ on the model $\upsilon(\gamma(M))$ and in particular, $\varphi$ is true on $\upsilon(\gamma(M))$ wrt valuation $\Upsilon$.

Using the effective technique proposed in the proof of Lemma 3.4.10 on pages 324--325 in \cite{Rbook}, we extend $\Upsilon$ to a definable valuation $\Upsilon_M$, given on the whole model $Ch(\mathbf{PM2})_k$, where $\varphi$ valid wrt $\Upsilon_M$ at every cluster of $Ch(\mathbf{PM2})_k$ (this is possible due to the execution of properties (1)--(3) above). Then $\varphi$ valid on the model $Ch(\mathbf{PM2})_k$ wrt the resulting valuation $\Upsilon_M$, so $\Upsilon_M$ gives a unifier for $\varphi$ in $\mathbf{PM2}$, and since extension of $\Upsilon$ to $\Upsilon_M$ does not change the truth values on $\upsilon(\gamma(M))$, then $\Upsilon_M$ matches on $\upsilon(\gamma(M))$ with $\Upsilon$. Thus, holds 

\begin{lemma}\label{unif_Sm}
$\forall M\in SM(\varphi,\mathbb{V}_m)$ substitution $\sigma_M$, defining $\Upsilon_M$, 
is an unifier for $\varphi$ and valuation $\Upsilon_M$ matches on $\upsilon(\gamma(M))$ with $\Upsilon$.
\end{lemma}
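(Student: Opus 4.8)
The plan is to verify, in order, the two assertions bundled into Lemma~\ref{unif_Sm}: first that the substitution $\sigma_M$ induced by $\Upsilon_M$ is genuinely a unifier for $\varphi$ in $\mathbf{PM2}$, and second that $\Upsilon_M$ and $\Upsilon$ agree on the submodel $\upsilon(\gamma(M))$. Almost all the work has already been front-loaded into the paragraph preceding the statement, so the proof is essentially a bookkeeping argument assembling pieces already established. I would begin by recalling that $Ch(\mathbf{PM2})_k$ denotes the $n$-characteristic model $\mathcal{T}_k^2$ of Lemma~\ref{ncharPM2}, so that validity of a formula on $Ch(\mathbf{PM2})_k$ is equivalent to membership in $\mathbf{PM2}$; this is the bridge that converts "valid at every cluster of the characteristic model" into "is a theorem", hence into "is a unifier".

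First I would fix $M\in SM(\varphi,\mathbb{V}_m)$ and spell out the chain of constructions: the submodel $\upsilon(\gamma(M))$ carved out of $Ch(\mathbf{PM2})_k$ is, by the defining property of $\gamma(M):=\bigvee_{\varphi_j\in M}\gamma_j$ and the fact that $M$ satisfies conditions (1)--(3), an open submodel on which $\varphi$ holds under $\upsilon$; then $\Upsilon$ is defined to coincide with $\upsilon$ on $\upsilon(\gamma(M))$, so $\varphi$ holds under $\Upsilon$ there too. The next step invokes the extension technique of Lemma~3.4.10 (pp.~324--325) of \cite{Rbook}: because properties (1)--(3) guarantee the combinatorial conditions needed by that lemma, $\Upsilon$ extends to a definable valuation $\Upsilon_M$ on all of $Ch(\mathbf{PM2})_k$ under which $\varphi$ is valid at every cluster, and this extension leaves the truth values on $\upsilon(\gamma(M))$ untouched. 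I would then close the argument: since $\varphi$ is valid on all of the $n$-characteristic model $\mathcal{T}_k^2 = Ch(\mathbf{PM2})_k$ under $\Upsilon_M$, Lemma~\ref{ncharPM2} gives $\sigma_M(\varphi)\in\mathbf{PM2}$ where $\sigma_M$ is the substitution sending each $x_i$ to the formula defining $\Upsilon_M(x_i)$ (definability, from Lemma~12-style reasoning, is exactly what lets us read off an actual substitution from the valuation); hence $\sigma_M$ is a unifier. The second assertion is immediate from the two "does not change truth values" clauses: $\Upsilon_M$ agrees with $\Upsilon$ on $\upsilon(\gamma(M))$ by the non-disturbance property of the extension.

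The main obstacle — or rather the only genuinely nontrivial point — is the appeal to the extension lemma from \cite{Rbook}: one must check that conditions (1)--(3) on the model $M$ are precisely the hypotheses under which a partial definable valuation on the designated submodel can be pushed out to a total definable valuation that keeps $\varphi$ valid everywhere. Conditions (1) and (2) ensure that the modal and reflexive structure of $\mathcal{T}_k^2$ is compatible with the $\gamma_j$ description, and condition (3), the closure-under-unions-of-$\Theta_2$-sets property, is what supplies enough clusters in the characteristic model to realize every needed $R$-successor pattern when defining $\Upsilon_M$ on clusters outside $\upsilon(\gamma(M))$. I would therefore devote the bulk of the written proof to matching (1)--(3) against the requirements of the cited construction, and treat everything else as routine quotation of Lemmas~\ref{ncharPM2} and the definability lemma. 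No new estimates or case analyses are needed beyond what the earlier lemmas provide.
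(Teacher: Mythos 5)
Your proof follows the same route as the paper, which in fact proves this lemma only implicitly via the paragraph preceding its statement: restrict to the submodel $\upsilon(\gamma(M))$, observe that $\Upsilon$ agrees with $\upsilon$ there, extend to a definable $\Upsilon_M$ on all of $Ch(\mathbf{PM2})_k$ by the technique of Lemma 3.4.10 of \cite{Rbook} using properties (1)--(3), and read off the unifier $\sigma_M$ from definability together with the $n$-characterization property of Lemma~\ref{ncharPM2}. Your version is, if anything, more explicit than the paper's about why (1)--(3) license the extension step, but the argument is the same.
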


\begin{lemma}\label{max_unif}
For any unifier $\sigma$ of $\varphi$ there is a model $M\in SM(\varphi,\mathbb{V}_m)$ and substitution $\sigma_1$, s.t. $\sigma(x_i)\equiv_{\mathbf{PM2}}\sigma_1(\sigma_M(x_i))$, where $\sigma_M$ is a substitution, defining valuation $\Upsilon_M$.
\end{lemma}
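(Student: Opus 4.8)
The goal is to show that the family $\{\sigma_M \mid M \in SM(\varphi,\mathbb{V}_m)\}$ constructed in Lemma~\ref{unif_Sm} is a complete set of unifiers: given an arbitrary unifier $\sigma$ of $\varphi$, I want to locate one particular $M \in SM(\varphi,\mathbb{V}_m)$ such that $\sigma$ factors through $\sigma_M$ up to $\mathbf{PM2}$-provable equivalence. The natural candidate for $M$ is exactly the model extracted from $\sigma$ in Lemma~12 (the one whose existence that lemma guarantees): starting from the unifier $\sigma$ we pass, via the $n$-characteristic model $\mathcal{T}_r^2$ with $r = |Var(\sigma(\varphi))|$ and the valuation $S$ witnessing $\langle\mathcal{T}_r^2\rangle \Vdash_S \varphi$, to the set $X$ of subformulas $\varphi_j$ with $S(\varphi_j)\neq\varnothing$, and this $X$ — equipped with the relation $R$ defined via $\Theta_2$-inclusion — is precisely a model in $SM(\varphi,\mathbb{V}_m)$ satisfying conditions (1)--(3). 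So take $M := X$.

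\textbf{Key steps.} First I would fix $M = X$ as above and recall that by Lemma~\ref{unif_Sm} the valuation $\Upsilon_M$ agrees with $\Upsilon$ on the submodel $\upsilon(\gamma(M))$, and that $\sigma_M$ is the substitution defining $\Upsilon_M$. Second, I would use the valuation $S$ together with the definability of clusters in $\mathcal{T}_r^2$ (the ``Any cluster is definable'' lemma, applied to the open submodel $\mathcal{T}_r^2$ on the clusters in $\bigcup_{\varphi_j \in X} S(\varphi_j)$) to write, for each variable $p_i$ of $\varphi$, the set $S(p_i)$ as the $\Upsilon_M$-value of some definable formula; this produces the candidate second substitution $\sigma_1$ with $\sigma_1 \circ \sigma_M$ matching $S$ on the nose on the relevant part of $\mathcal{T}_r^2$. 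Third — the crux — I would verify that $\sigma(x_i) \equiv_{\mathbf{PM2}} \sigma_1(\sigma_M(x_i))$ for every variable $x_i$, by checking equality of valuations on the $n$-characteristic model $\mathcal{T}_k^2$ (or $\mathcal{T}_r^2$) for $k = |Var(\varphi)|$: on the submodel $\upsilon(\gamma(M)) \cong$ (the image of) $X$ inside $\mathcal{T}_r^2$ the two sides coincide because $\Upsilon_M = \Upsilon = S$ there, and outside it one invokes that $\Upsilon_M$ was obtained precisely by the Lemma~3.4.10 extension procedure from \cite{Rbook}, which is designed so that the extended valuation is controlled by its values on the distinguished submodel whenever properties (1)--(3) hold. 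Finally, appeal to the Kripke completeness / $n$-characterization of $\mathbf{PM2}$ (Lemma~\ref{ncharPM2}) to upgrade the semantic agreement to $\mathbf{PM2}$-provable equivalence.

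\textbf{Main obstacle.} I expect the hard part to be the third step: making precise the claim that the extension from $\Upsilon$ to $\Upsilon_M$ produced by the technique of Lemma~3.4.10 in \cite{Rbook} is ``canonical enough'' that an arbitrary unifier $\sigma$ — whose associated valuation $S$ need not be the one hand-picked in the construction of $\Upsilon_M$ — still factors through it. Concretely, one must show that the $p$-morphism (or open-submodel embedding) from the relevant part of $\mathcal{T}_r^2$ onto the model $\upsilon(\gamma(M))$ respects the valuations in a way that lets the definable formulas witnessing $\sigma_M$ be post-composed with $\sigma_1$ to recover $\sigma$; this is where conditions (1)--(3) on $M$ (reflexivity giving $\Theta_1 \subseteq \Theta_2$, self-validity $\varphi_j \Vdash \varphi_j$, and the closure-under-unions property of the $\Theta_2$-sets) are all used, and one has to track them through the bounded-depth structure specific to $\mathbf{PM2}$ (depth $\leq 2$) and, mutatis mutandis, $\mathbf{PM3}$ (depth $\leq 3$). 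Once this ``factorisation lemma'' for the Rybakov-style extension is in place, the conclusion $\sigma \preceq \sigma_M$ in $\mathbf{PM2}$ is immediate, and combining Lemma~\ref{unif_Sm} with the present lemma yields that $SM(\varphi,\mathbb{V}_m)$ — which is finite since $J$ is finite — gives a finite complete set of unifiers, hence $\mathbf{PM2}$ (and likewise $\mathbf{PM3}$) has finitary unification.
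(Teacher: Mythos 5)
Your choice of $M$ is the same as the paper's: the model $X$ extracted from the given unifier $\sigma$ via the $r$-characteristic model and the valuation $S$, as in the preceding lemma. But your proposal does not actually close the argument: the step you yourself flag as the ``main obstacle'' --- producing $\sigma_1$ and proving the factorisation $\sigma(x_i)\equiv_{\mathbf{PM2}}\sigma_1(\sigma_M(x_i))$ --- is exactly the content of the lemma, and it is left as an open task rather than proved. Moreover, the route you sketch for building $\sigma_1$ (encoding $S(p_i)$ as a definable set of clusters and post-composing with the Lemma~3.4.10 extension) is both harder than necessary and not obviously executable, since an arbitrary unifier $\sigma$ has no a priori connection to the definable valuation $\Upsilon_M$ beyond the fact that both validate $\varphi$.

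The idea you are missing is that one can simply take $\sigma_1:=\sigma$. The construction of $\Upsilon_M$ guarantees (Lemma~\ref{unif_Sm}) that $\Upsilon_M$ coincides with the identity-like valuation $\Upsilon$ on the submodel $\upsilon(\gamma(M))$; semantically this is the statement that $\gamma(M)\rightarrow[x_i\equiv\sigma_M(x_i)]$ holds, i.e.\ $\sigma_M$ is ``projective relative to $\gamma(M)$''. Now, because $\sigma$ is a unifier and $M$ was built from $\sigma$, one checks on $Ch(\mathbf{PM2})_k$ (with $k=|Var(\sigma(\varphi))|$) that $\bigvee_{\varphi_j\in M}\sigma(\varphi_j)$, hence $\sigma(\gamma(M))$, is valid at every cluster, so $\upsilon(\sigma(\gamma(M)))$ is the whole characteristic model. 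Applying the substitution $\sigma$ to the relative-projectivity statement gives
\begin{equation*}
Ch(\mathbf{PM2})_k \Vdash_{\upsilon} \sigma(\gamma(M)) \rightarrow [\sigma(x_i)\equiv \sigma(\sigma_M(x_i))],
\end{equation*}
and since the antecedent is valid everywhere, $\sigma(x_i)$ and $\sigma(\sigma_M(x_i))$ agree at every cluster of the $n$-characteristic model, which by Lemma~\ref{ncharPM2} yields $\sigma(x_i)\equiv_{\mathbf{PM2}}\sigma(\sigma_M(x_i))$. No appeal to definability of clusters or to the internal mechanics of the Lemma~3.4.10 extension is needed beyond what Lemma~\ref{unif_Sm} already records.
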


\begin{proof}
We show that $\sigma(x_i)\equiv_{\mathbf{PM2}}\sigma_1(\sigma_M(x_i))$ for arbitrary unifier $\sigma$. 
Consider the model $Ch(\mathbf{PM2})_k$, where $k$ is a number of all variables 
that occur in all of $\sigma(x_i)$ (i.e. a number of all variables of $\sigma(\varphi)$), $a\in Ch(\mathbf{PM2})_k$.

Due to the fact that $\sigma$ is an unifier of $\varphi$, as shown above there is a model $M\in SM(\varphi,\mathbb{V}_m)$, 
built on some $\varphi_j$ from the disjunction with the properties (1)--(3) and, 
in particular, 
$$Ch(\mathbf{PM2})_k \Vdash_{\upsilon} \bigvee_{\varphi_j\in M(\varphi)} \sigma(\varphi_j).$$
Then holds
$Ch(\mathbf{PM2})_k \Vdash_{\upsilon} \bigvee_{\varphi_j\in M(\varphi)} \sigma(\gamma_j)$ 
and $Ch(\mathbf{PM2})_k \Vdash_{\upsilon} \sigma(\gamma(M)).$

Thus, $\upsilon(\sigma(\gamma (M))) = Ch(\mathbf{PM2})_k$. 
Moreover, by the Lemma \ref{unif_Sm}, 

\begin{equation*}
Ch(\mathbf{PM2})_k \Vdash_{\upsilon} \sigma(\gamma (M)) \rightarrow [\sigma(x_i)\equiv \sigma(\sigma_M(x_i))].
\end{equation*}

In particular, for any variable $x_i$ and $a\in Ch(\mathbf{PM2})_k$

$$\langle Ch(\mathbf{PM2})_k,a \rangle \Vdash_{\upsilon} \sigma(x_i) \Leftrightarrow \langle Ch(\mathbf{PM2})_k,a \rangle \Vdash_{\upsilon} \sigma(\sigma_M(x_i)).$$
\end{proof}

Thus, all constructed unifiers $\sigma_M$ defined by the $\Upsilon_M$ give a finite complete set of unifiers for the formula $\varphi$ in the logic $\mathbf{PM2}$. Using similar reasoning, but taking $\mathcal{T}_{r}^{3}$ as the basic $n$-characteristic model, $\Upsilon_M$ also allows us to construct a finite $CU$ for $\mathbf{PM3}$ too. Therefore, the following is true

\begin{theorem}
Logics $\mathbf{PM2}$ and $\mathbf{PM3}$ have finitary type of unification.
\end{theorem}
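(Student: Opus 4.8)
The plan is to assemble the final theorem from the machinery developed in the preceding lemmas, since almost all the real work has already been done; the theorem is essentially a bookkeeping statement that a finite complete set of unifiers exists for every unifiable formula. First I would fix an arbitrary unifiable formula $\varphi$ in $\mathbf{PM2}$ (the case of $\mathbf{PM3}$ being identical after replacing $\mathcal{T}_n^2$ by $\mathcal{T}_n^3$ throughout) and pass to its reduced normal form $\varphi_{rf} := \bigvee_{j\in J}\varphi_j$, which is legitimate by Theorem~\ref{RNF} and Lemma~\ref{unif_rf}: $\varphi$ is unifiable iff $\varphi_{rf}$ is, and unifiers transfer back and forth by restriction/extension of the substitution to the auxiliary variables. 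So it suffices to exhibit a finite complete set of unifiers for $\varphi_{rf}$.

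Next I would invoke the collection $SM(\varphi_{rf},\mathbb{V}_m)$ of all models $M(\varphi_{rf})$ built on subsets of $\{\varphi_j\}_{j\in J}$ as clusters and satisfying the closure conditions (1)--(3). This set is \emph{finite}, since $J$ is finite (it is bounded by the number of disjuncts in the rnf, hence by $2^{2n}$) and each $M$ is determined by its underlying subset of $\{\varphi_j\}_{j\in J}$ together with the fixed relation $R$; there are at most $2^{|J|}$ such subsets. For each $M\in SM(\varphi_{rf},\mathbb{V}_m)$, Lemma~\ref{unif_Sm} produces a unifier $\sigma_M$ of $\varphi$ defined by the definable valuation $\Upsilon_M$ on the whole model $Ch(\mathbf{PM2})_k$. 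I would then set $CU(\varphi) := \{\sigma_M \mid M\in SM(\varphi_{rf},\mathbb{V}_m)\}$, which is finite because $SM(\varphi_{rf},\mathbb{V}_m)$ is. Completeness of $CU(\varphi)$ is exactly the content of Lemma~\ref{max_unif}: for any unifier $\sigma$ of $\varphi$ there is $M\in SM(\varphi_{rf},\mathbb{V}_m)$ and a substitution $\sigma_1$ with $\sigma(x_i)\equiv_{\mathbf{PM2}}\sigma_1(\sigma_M(x_i))$ for every variable $x_i$ of $\varphi$, i.e. $\sigma \preceq \sigma_M$ with witness $\sigma_1$. Hence every unifier is below some member of the finite set $CU(\varphi)$, so $CU(\varphi)$ is a finite complete set of unifiers.

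From the existence of a finite complete set of unifiers for every unifiable $\varphi$, the unification type follows immediately: each maximal unifier must be equivalent (under $\preceq$ in both directions) to one in $CU(\varphi)$ — otherwise it would be strictly above some $\sigma_M\in CU(\varphi)$ while $\sigma_M$ would also be below it by completeness, a contradiction — so there are at most $|CU(\varphi)|$ maximal unifiers up to equivalence, hence finitely many. Since $\mathbf{PM2}$ and $\mathbf{PM3}$ were already shown not to have unitary type (the formulas $\Box x\vee\Box\neg x$ and the Lemmon formula $L$ witness genuinely non-unary unification), the type is exactly \emph{finitary}. The argument for $\mathbf{PM3}$ is word-for-word the same once $\mathcal{T}_r^2$ is replaced by $\mathcal{T}_r^3$ and $\mathbb{V}_m$ by $\mathbb{U}_m$, as noted before the statement.

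The only genuinely delicate point — and the one I would want to present carefully rather than wave at — is the step inside Lemma~\ref{unif_Sm}/\ref{max_unif} where $\Upsilon$ is extended from the submodel $\upsilon(\gamma(M))$ to a \emph{definable} valuation $\Upsilon_M$ on all of $Ch(\mathbf{PM2})_k$ with $\varphi$ still valid everywhere. This relies on the definability of clusters in the $n$-characteristic model (the definability lemma) together with the weak co-covering property of $\mathbf{PM2}$ and $\mathbf{PM3}$, which is what guarantees that closure conditions (1)--(3) can actually be realized; the technique is the one in the proof of Lemma 3.4.10 of \cite{Rbook}. For the final theorem itself, however, this is a black box: given the earlier lemmas, the proof is just ``finitely many models $M$ $\Rightarrow$ finitely many unifiers $\sigma_M$ $\Rightarrow$ finite complete set $\Rightarrow$ finitary type, and non-unitary by the counterexamples.''
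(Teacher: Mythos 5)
Your proposal is correct and follows essentially the same route as the paper: pass to the reduced normal form via Theorem~\ref{RNF} and Lemma~\ref{unif_rf}, take the finite family $SM(\varphi_{rf},\mathbb{V}_m)$, obtain the unifiers $\sigma_M$ from Lemma~\ref{unif_Sm}, get completeness from Lemma~\ref{max_unif}, and combine with the earlier non-unitarity counterexamples. You are in fact somewhat more explicit than the paper on the two points it leaves implicit (the bound $|SM|\leq 2^{|J|}$ and the passage from a finite complete set to finitely many maximal unifiers), so no gap.
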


\section{${\bm{\mathbf{PM4}}}$ has a unitary type.}

\begin{theorem}
Any unifiable in $\mathbf{PM4}$ formula is projective.
\end{theorem}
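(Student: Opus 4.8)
Since $\mathbf{S}4.3\mathbf{M}\subseteq\mathbf{PM4}$, Corollary~\ref{PM4} already yields projective unification abstractly; the plan is to make it explicit by exhibiting a concrete projective unifier for every unifiable $\varphi(p_1,\dots,p_s)$. First, by Lemma~\ref{ground}, fix a ground unifier $c: p_i\mapsto c_i\in\{\top,\bot\}$, so that $\varphi(c_1,\dots,c_s)\in\mathbf{PM4}$; equivalently the propositional skeleton $\varphi^{\flat}$ of $\varphi$ (obtained by deleting every $\Box$ and $\Diamond$) is classically satisfied by the assignment $c$. Put $\varphi^{\uparrow}:=\varphi^{\flat}[\Box\Diamond p_1/p_1,\dots,\Box\Diamond p_s/p_s]$. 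I would begin by recording two elementary facts about the scales $\mathbb{Y}_m$ (a bottom cluster lying strictly below the singular top cluster $\{m\}$): (i) for any formula $\alpha$ and any point of $\mathbb{Y}_m$, the value of $\Box\Diamond\alpha$ equals the value of $\alpha$ at $m$ --- a non-top point sees the whole scale, so $\Diamond\alpha$ there reads ``$\alpha$ somewhere'' while at $m$ it reads ``$\alpha$ at $m$'', and the outer $\Box$ extracts the value at $m$; consequently $\varphi^{\uparrow}$ equals, at every point of $\mathbb{Y}_m$, the value of $\varphi^{\flat}$ at the restriction $\upsilon|_{\{m\}}$ of the valuation; and (ii) under a constant valuation every modality is inert, so $\varphi$ and $\varphi^{\flat}$ then coincide.

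The candidate unifier is the layered conditional
\[
\sigma(p_i):=(\Box\varphi\wedge p_i)\vee(\neg\Box\varphi\wedge\varphi^{\uparrow}\wedge\Box\Diamond p_i)\vee(\neg\varphi^{\uparrow}\wedge c_i).
\]
Because $m$ lies above every point, $\Box\varphi\to\varphi^{\uparrow}\in\mathbf{PM4}$ by (i), hence the three guards $\Box\varphi$, $\neg\Box\varphi\wedge\varphi^{\uparrow}$, $\neg\varphi^{\uparrow}$ are exhaustive and pairwise exclusive at every point. Condition~2 for a projective unifier is then immediate: wherever $\Box\varphi$ holds, $\sigma(p_i)$ collapses to $p_i$ while the other disjuncts vanish, so $\Box\varphi\to(p_i\equiv\sigma(p_i))\in\mathbf{PM4}$; wherever $\Box\varphi$ fails this implication is vacuous. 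All the content is thus Condition~1, $\sigma(\varphi)\in\mathbf{PM4}$; by the finite model property of $\mathbf{PM4}$ it suffices to prove, for every scale $\mathbb{Y}_m$ and valuation $\upsilon$, that $\langle\mathbb{Y}_m,x\rangle\Vdash_{\upsilon'}\varphi$ for all $x$, where $\upsilon'(p_i):=\upsilon(\sigma(p_i))$.

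I would then split on the behaviour of $\varphi$ under $\upsilon$. \emph{Case (a):} $\varphi$ is true everywhere; then the first guard holds at every point, $\upsilon'=\upsilon$, and $\sigma(\varphi)$ holds because $\varphi$ does. \emph{Case (c):} $\varphi$ is false at $m$; then $\varphi^{\uparrow}$ is false everywhere, the third guard holds at every point, $\upsilon'$ is the constant valuation $c$, and $\sigma(\varphi)\equiv\varphi(c_1,\dots,c_s)\in\mathbf{PM4}$. \emph{Case (b):} $\varphi$ is true at $m$ but false at some bottom point; then $\Box\varphi$ holds at $m$ (so $\upsilon'$ agrees with $\upsilon$ at $m$) and fails at every bottom point, where the second guard holds and, by (i) together with ``$\varphi$ true at $m$'', $\sigma(p_i)$ collapses to $\Box\Diamond p_i$, i.e. to the value of $p_i$ at $m$; hence under $\upsilon'$ the whole scale $\mathbb{Y}_m$ carries the constant valuation $\upsilon|_{\{m\}}$, and by (ii) $\varphi$ holds at every point of $\langle\mathbb{Y}_m,\upsilon'\rangle$ iff $\varphi^{\flat}$ is true at $\upsilon|_{\{m\}}$, which holds since $\varphi$ is true at $m$ under $\upsilon$.

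The delicate step, and the one I would write out in detail, is Case~(b): one must confirm that the guard $\neg\Box\varphi\wedge\varphi^{\uparrow}$ isolates \emph{exactly} the configurations ``the top cluster satisfies $\varphi$, the bottom one does not'', and that overwriting the bottom layer by $p_i\mapsto\Box\Diamond p_i$ is both forced to be uniform across that cluster (so no disagreement can persist) and genuinely returns a model of $\varphi$. Both halves reduce, via collapsing a uniformly valued cluster to a single point (a bisimulation) and the two facts (i), (ii), to the two-element chain $\mathbb{Y}_1$. The remaining bookkeeping --- exhaustiveness of the guards and the Boolean simplifications inside $\sigma$ --- is routine. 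This produces the explicit projective unifier promised after Corollary~\ref{PM4} and, through Lemma~\ref{Ghil}, re-derives that $\mathbf{PM4}$ has unitary unification.
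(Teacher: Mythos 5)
Your construction is correct, but it takes a genuinely different route from the paper, and the difference is substantive rather than cosmetic. The paper works with the two\mbox{-}guard substitution $\sigma(p_i):=(\Box\varphi\wedge p_i)\vee(\Diamond\neg\varphi\wedge gu(p_i))$ and argues pointwise: wherever $\Box\varphi$ holds $\sigma$ acts as the identity, wherever $\Diamond\neg\varphi$ holds it acts as the ground unifier. That pointwise argument does not control the configuration you isolate as Case~(b): when $\varphi$ holds at the top point $m$ but fails on the bottom cluster, the paper's $\sigma$ produces a \emph{mixed} valuation ($gu$ below, the original $\upsilon$ at $m$), and since $\varphi$ is modal its truth at a bottom point depends on the value of $\sigma(p_i)$ at $m$ as well, so nothing guarantees $\varphi$ survives the mixture. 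Concretely, for $\varphi:=(p\wedge\Box p)\vee(\neg p\wedge\Box\neg p)$ on $\mathbb{Y}_1$ with $gu(p)=\bot$ and $p$ true only at the top point, the paper's $\sigma(p)$ reduces to $\Box\varphi\wedge p$, which is again true only at the top, so $\sigma(\varphi)$ remains false at the bottom point; the symmetric valuation defeats the choice $gu(p)=\top$. Your middle guard $\neg\Box\varphi\wedge\varphi^{\uparrow}\wedge\Box\Diamond p_i$ is exactly what repairs this: it copies the top point's valuation down onto the bottom cluster, after which your facts (i) and (ii) reduce everything to the propositional truth of $\varphi^{\flat}$ at $\upsilon|_{\{m\}}$, which is guaranteed because $\varphi$ holds at the reflexive singleton $m$. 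The price is the auxiliary formula $\varphi^{\uparrow}$ and the check that the three guards are exhaustive and mutually exclusive, which you correctly reduce to the $\mathbb{Y}_m$\mbox{-}validity of $\Box\varphi\to\varphi^{\uparrow}$; what it buys is a substitution that is actually a unifier in all three configurations, while Condition~2 of projectivity is verified the same way in both arguments (only the $\Box\varphi$\mbox{-}guarded disjunct can fire under $\Box\varphi$). The only step left implicit is the uniformity of $\Box\Diamond p_i$ across the bottom cluster, which is immediate since all bottom points of $\mathbb{Y}_m$ see the same set; that is bookkeeping, not a gap.
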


\begin{proof}
Let the formula $\varphi (p_1, \dots, p_s)$ be unifiable in $\mathbf{PM4}$.  
For all variables $p_i \in Var(\varphi)$ consider the following substitution $\sigma(p_i)$:
$$\sigma(p_i):= (\Box \varphi\wedge p_i) \vee 
(\Diamond\neg\varphi\wedge gu(p_i)),$$
where $gu(p_1), \dots, gu(p_s)$ is a ground unifier for $\varphi$, obtained by the algorithm proposed in the Theorem \ref{ground}.

Let $M_{\mathbb{Y}_n}:=\langle \mathbb{Y}_n,V\rangle$ be a model of $\mathbf{PM4}$ with arbitrary valuation $V$. 
If $\sigma$ is an unifier for $\varphi$, then $\sigma(\varphi) \in \mathbf{PM4}$ and $\forall x \in\mathbb{Y}_n$ $\langle M_{\mathbb{Y}_n},x\rangle \Vdash_V \sigma (\varphi)$.
We show that the substitution $\sigma$ is a projective unifier of $\varphi$ in $\mathbf{PM4}$. To do this, check both points of the definition of projective unifier.
\begin{enumerate}
	\item $\sigma(\varphi)\in \mathbf{PM4}$. The following cases are possible:
	\begin{enumerate}
		\item If for some $x\in \mathbb{Y}_n: \langle M_{\mathbb{Y}_n},x\rangle \Vdash_{V} \varphi$ and $\forall y\in \mathbb{Y}_n$, s.t. $xRy$, also holds $\langle \mathbb{Y}_n,y\rangle \Vdash_{V} \varphi$, then $\langle \mathbb{Y}_n,x\rangle \Vdash_{V} \Box \varphi$ and, consequently, second disjunctive term $\sigma(p_i)$ is disproved on the cluster $x$. In this case, if $\langle \mathbb{Y}_n,x\rangle \Vdash_{V} p_i$, then $\langle \mathbb{Y}_n,x\rangle \Vdash_{V} \Box \varphi \wedge p_i$ and therefore $\langle \mathbb{Y}_n,x\rangle \Vdash_{V} \sigma(p_i)$. If $\langle \mathbb{Y}_n,x\rangle \Vdash_{V} \neg p_i$, then $\langle \mathbb{Y}_n,x\rangle \nVdash_{V} \Box \varphi \wedge p_i$ and, consequently, $\langle \mathbb{Y}_n,x\rangle \Vdash_{V} \neg\sigma(p_i)$.  Hence we conclude that truth values of $\varphi (p_1, \ldots, p_s)$ on $x$ wrt valuation $V$ coincide with truth values of $\varphi (\sigma(p_1), \ldots, \sigma(p_s))$ at the same cluster wrt $V$, so in this case $\langle \mathbb{Y}_n,x\rangle \Vdash_{V} \sigma(\varphi)$.
		\item If, regardless of $\langle \mathbb{Y}_n,x\rangle \Vdash_{V} \varphi$ or $\langle \mathbb{Y}_n,x\rangle \Vdash_{V} \neg \varphi$, there is a cluster $y\in \mathbb{Y}_n: xRy$, such that on it holds $\langle \mathbb{Y}_n,y\rangle \Vdash_{V} \neg \varphi$, then obviously $\langle \mathbb{Y}_n,x\rangle \Vdash_{V} \Diamond\neg \varphi$, and truth values of any $\sigma(p_i)$ on $x$ coincide with $gu(p_i)$. By virtue of the choice of the ground unifier $gu(\varphi) \in \mathbf{PM4}$, holds $\langle \mathbb{Y}_n,x\rangle \Vdash_{V} gu(\varphi)$, and therefore in this considered case also $\langle \mathbb{Y}_n,x\rangle \Vdash_{V} \sigma(\varphi)$.
	\end{enumerate}

Since all possible variants of valuations are described by this two cases, $\sigma(\varphi)\in \mathbf{PM4}$ (i.e. $\sigma$ is an unifier) for an arbitrary unifiable in $\mathbf{PM4}$ formula $\varphi$. 
	\item $\Box \varphi \rightarrow [p_i \equiv \sigma (p_i)]\in \mathbf{PM4}$ for any variable $p_i\in Var(\varphi)$

When substituting $\sigma(p_i)$ into the expression above (i.e. to the second condition of definition), we get the following: $\forall p_i \in Var(\varphi)$
$$
\Box \varphi \rightarrow (p_i \leftrightarrow [(\Box \varphi\wedge p_i) \vee (\Diamond\neg\varphi\wedge gu(p_i))])\in \mathbf{PM4},  
$$
if $\sigma$ is a projective unifier for $\varphi$. By contradiction: let $\sigma$ does not satisfy 2nd condition. Then $\exists x\in \mathbb{Y}_n$
\begin{equation}
\langle \mathbb{Y}_n,x\rangle \Vdash_{V} \Box \varphi, 
\end{equation}
but
\begin{equation}
\langle \mathbb{Y}_n,x\rangle \nVdash_{V} p_i \leftrightarrow [(\Box \varphi\wedge p_i) \vee (\Diamond\neg\varphi\wedge gu(p_i))].
\end{equation}

In this case,
\begin{equation}
\langle \mathbb{Y}_n,x\rangle \nVdash_{V} p_i \rightarrow [(\Box \varphi\wedge p_i) \vee (\Diamond\neg\varphi\wedge gu(p_i))], 
\end{equation}

or
\begin{equation}
\langle \mathbb{Y}_n,x\rangle \nVdash_{V} [(\Box \varphi\wedge p_i) \vee (\Diamond\neg\varphi\wedge gu(p_i))] \rightarrow p_i. 
\end{equation}

If there is (3), holds $\langle \mathbb{Y}_n,x\rangle \Vdash_{V} p_i$, but, by virtue of validity of (1) and $p_i$ on $x$, $\langle \mathbb{Y}_n,x\rangle \Vdash_{V} \Box \varphi\wedge p_i$, and therefore $\langle \mathbb{Y}_n,x\rangle \Vdash_{V} p_i \rightarrow [(\Box \varphi\wedge p_i) \vee (\Diamond\neg\varphi\wedge gu(p_i))]$.

If there is (4), holds $\langle \mathbb{Y}_n,x\rangle \Vdash_{V} [(\Box \varphi\wedge p_i) \vee (\Diamond\neg\varphi\wedge gu(p_i))]$. This is only possible with $\langle \mathbb{Y}_n,x\rangle \Vdash_{V} p_i$, because $\langle \mathbb{Y}_n,x\rangle \Vdash_{V} \Box \varphi$ by virtue of (1), which means that in $\sigma (p_i)$ Only the first disjunct can be valid. Therefore, the conclusion of the formula (4) is true and $\langle \mathbb{Y}_n,x\rangle \Vdash_{V} [(\Box \varphi\wedge p_i) \vee (\Diamond\neg\varphi\wedge gu(p_i))] \rightarrow p_i.$
From everything described above it follows that $\sigma$ is the projective unifier for 
$\varphi$ in $\mathbf{PM4}$, which means $\varphi$ itself is projective.
\end{enumerate}
\end{proof}

By virtue of what has been proved, for any unifiable in $\mathbf{PM4}$ formula $\varphi$ there is a projective unifier, the construction of which is proposed in the proof scheme. By Lemma \cite{Ghil1}, holds

\begin{corollary}
Let  $\varphi (p_1, \dots, p_s)$ be an arbitrary unifiable formula in $\mathbf{PM4}$, and $\sigma(p_i):= (\Box \varphi\wedge p_i) \vee 
(\Diamond\neg\varphi\wedge gu(p_i))$ be the substitution for each variable of formula. Then 

1. $\sigma$ is a mgu for $\varphi$;

2. $\{\sigma\}$ forms a complete set of unifiers for $\varphi$.
\end{corollary}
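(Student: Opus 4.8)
The plan is to derive this corollary directly from the theorem just established together with Ghilardi's Lemma~\ref{Ghil}. The theorem shows that every formula $\varphi$ that is unifiable in $\mathbf{PM4}$ is in fact projective, and its proof exhibits precisely the substitution $\sigma(p_i):= (\Box \varphi\wedge p_i) \vee (\Diamond\neg\varphi\wedge gu(p_i))$ as a projective unifier for $\varphi$: the first item of that proof verifies $\sigma(\varphi)\in\mathbf{PM4}$, and the second verifies $\Box\varphi\rightarrow[p_i\equiv\sigma(p_i)]\in\mathbf{PM4}$ for every $p_i\in Var(\varphi)$. So the content of the corollary is just the translation of ``$\sigma$ is a projective unifier'' into ``$\sigma$ is an mgu / $\{\sigma\}$ is a complete set of unifiers''.

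Concretely, I would proceed as follows. First, fix an arbitrary unifiable $\varphi(p_1,\dots,p_s)$ and let $gu$ be a ground unifier of $\varphi$ obtained by the algorithm of Lemma~\ref{ground}; such a $gu$ exists precisely because $\varphi$ is unifiable. Form $\sigma$ as in the statement. By the theorem, $\sigma$ is a projective unifier for $\varphi$ in $\mathbf{PM4}$. Now apply Lemma~\ref{Ghil}: whenever a substitution is projective for a formula in $\mathcal{L}$, the singleton consisting of that substitution is a complete set of unifiers, equivalently the substitution is an mgu. Instantiating $\mathcal{L}=\mathbf{PM4}$ and the projective unifier $\sigma$ yields both claims: $\{\sigma\}$ is a complete set of unifiers for $\varphi$, and $\sigma$ is its most general unifier.

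There is essentially no obstacle here beyond bookkeeping. The only points that need care are to make sure that the $\sigma$ appearing in the corollary is literally the substitution produced in the proof of the theorem (it is, by construction), and that the notions of ``most general unifier'' and ``complete set of unifiers'' from the unification-theory section coincide on singletons --- i.e. that a complete set of size one consists exactly of an mgu, which is immediate since $\preceq$ is reflexive. Hence the corollary is a one-line consequence of the theorem and Lemma~\ref{Ghil}, with no further semantic argument over the scales $\mathbb{Y}_n$ required.
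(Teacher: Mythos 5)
Your proposal is correct and follows exactly the paper's own route: the corollary is obtained by combining the preceding theorem (which exhibits $\sigma$ as a projective unifier for any unifiable $\varphi$ in $\mathbf{PM4}$) with Lemma~\ref{Ghil}, which turns a projective unifier into an mgu and a singleton complete set. No further argument over the scales $\mathbb{Y}_n$ is needed, just as you observe.
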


\begin{corollary}
The logic $\mathbf{PM4}$ has a unitary type of unification.
\end{corollary}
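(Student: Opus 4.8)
The plan is to read this corollary off directly from the preceding results, since essentially all the work has already been done. Recall that the \emph{unitary} type is defined by a single requirement: every unifiable formula of the logic admits a most general unifier. Thus the whole content of the corollary is already contained in the theorem that every unifiable $\mathbf{PM4}$-formula $\varphi$ is projective, together with the explicit projective unifier
$$\sigma(p_i) := (\Box\varphi \wedge p_i) \vee (\Diamond\neg\varphi \wedge gu(p_i))$$
exhibited in its proof.

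First I would invoke Lemma~\ref{Ghil}: whenever a substitution is projective for $\varphi$ in $\mathcal{L}$, it is an mgu for $\varphi$ (and $\{\sigma\}$ is a complete set of unifiers). Combining this with the theorem above — which shows the displayed $\sigma$ is a projective unifier for an \emph{arbitrary} unifiable $\varphi$ — yields that every unifiable $\mathbf{PM4}$-formula has an mgu. That is exactly the definition of unitary unification, so the corollary follows. I would also remark, for completeness, that unifiability of a given formula in $\mathbf{PM4}$ is itself decidable by Lemma~\ref{ground}, so the mgu is genuinely effectively available.

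Since the substance is carried entirely by the projectivity theorem, there is no real obstacle remaining at this stage; the only point worth emphasizing in the write-up is that "unitary" quantifies over \emph{all} unifiable formulas, and that the construction of $\sigma$ above is uniform in $\varphi$ and applies to any unifiable input, which is precisely what the proof of the projectivity theorem delivers. The hard part — establishing projectivity of an arbitrary unifiable formula via the finite-depth models $\mathbb{Y}_n$ — was already the content of the earlier theorem, so the proof here is a one-line deduction.
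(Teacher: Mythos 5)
Your proposal is correct and follows the paper's own route exactly: combine the theorem that every unifiable $\mathbf{PM4}$-formula is projective (with the explicit unifier $\sigma(p_i):=(\Box\varphi\wedge p_i)\vee(\Diamond\neg\varphi\wedge gu(p_i))$) with Lemma~\ref{Ghil} to conclude that $\sigma$ is an mgu, whence the unitary type. Nothing is missing; the remark on effective availability via Lemma~\ref{ground} is a harmless addition.
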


\bibliography{PM_unif}

\end{document}